\newcommand{\R}{{\mathbb R}}
\newcommand{\N}{{\mathbb N}}
\newcommand{\Sp}{{\mathbb S}}
\newcommand{\no}{\nonumber}
\newcommand{\be}{\begin{eqnarray}}
\newcommand{\ben}{\begin{eqnarray*}}
\newcommand{\en}{\end{eqnarray}}
\newcommand{\enn}{\end{eqnarray*}}
\newcommand{\ba}{\backslash}
\newcommand{\pa}{\partial}
\newcommand{\ov}{\overline}
\newcommand{\I}{{\rm Im}}
\newcommand{\Rt}{{\rm Re}}
\newcommand{\G}{\Gamma}
\newcommand{\vep}{\varepsilon}
\newcommand{\Om}{\Omega}
\newcommand{\om}{\omega}
\newcommand{\la}{\lambda}
\newcommand{\ol}{\overline}
\newtheorem{remark}[theorem]{Remark}
\newtheorem{algorithm}{Algorithm}[section]
\newtheorem{problem}[theorem]{Problem}
\begin{document}
\renewcommand{\theequation}{\arabic{section}.\arabic{equation}}
%
%\title{\bf Reconstruct a global rough surface in elastic scattering problem using Cauchy data}
\title{\bf Near-field imaging of an unbounded elastic rough surface with a direct imaging method
}
\author{Xiaoli Liu\thanks{Academy of Mathematics and Systems Science, Chinese Academy of Sciences,
Beijing 100190, China and School of Mathematical Sciences, University of Chinese
Academy of Sciences, Beijing 100049, China ({\tt liuxiaoli@amss.ac.cn})}
\and Bo Zhang\thanks{LSEC, NCMIS and Academy of Mathematics and Systems Science, Chinese Academy of
Sciences, Beijing, 100190, China and School of Mathematical Sciences, University of Chinese
Academy of Sciences, Beijing 100049, China ({\tt b.zhang@amt.ac.cn})}
\and Haiwen Zhang\thanks{NCMIS and Academy of Mathematics and Systems Science, Chinese Academy of Sciences,
Beijing 100190, China ({\tt zhanghaiwen@amss.ac.cn})}}
\date{}

\maketitle

\begin{abstract}
This paper is concerned with the inverse scattering problem of time-harmonic elastic waves by an
unbounded rigid rough surface. A direct imaging method is developed to reconstruct the unbounded
rough surface from the elastic scattered near-field Cauchy data generated by point sources.
A Helmholtz-Kirchhoff-type identity is derived and then used to provide a theoretical analysis
of the direct imaging algorithm.
Numerical experiments are presented to show that the direct imaging algorithm
is fast, accurate and robust with respect to noise in the data.
\end{abstract}

\begin{keywords}
inverse elastic scattering, unbounded rough surface,
Dirichlet boundary condition, direct imaging method, near-field Cauchy data
\end{keywords}

\begin{AMS}
78A46, 35P25
\end{AMS}

\pagestyle{myheadings}
\thispagestyle{plain}
\markboth{X. Liu, B. Zhang, and H. Zhang}{Near-field imaging of an unbounded elastic rough surface}

\section{Introduction}\label{se1}
\setcounter{equation}{0}

Elastic scattering problems have received much attention from both the engineering and mathematical
communities due to their significant applications in diverse scientific areas such as
seismology, remote sensing, geophysics and nondestructive testing.

This paper focuses on the inverse problem of scattering of time-harmonic elastic waves by an unbounded
rough surface. The domain above the rough surface is filled with a homogeneous and isotropic elastic medium,
and the medium below the surface is assumed to be elastically rigid.
Our purpose is to image the unbounded rough surface from the scattered elastic near-field
Cauchy data measured on a horizontal straight line segment above the rough surface and
generated by point sources. For simplicity, in this paper, we are restricted to the two-dimensional case.
See Fig. \ref{fig0} for the problem geometry.
However, our imaging algorithm can be generalized to the three-dimensional case with appropriate modifications.

\begin{figure}[htbp]
  \centering
    \includegraphics[width=4.5in]{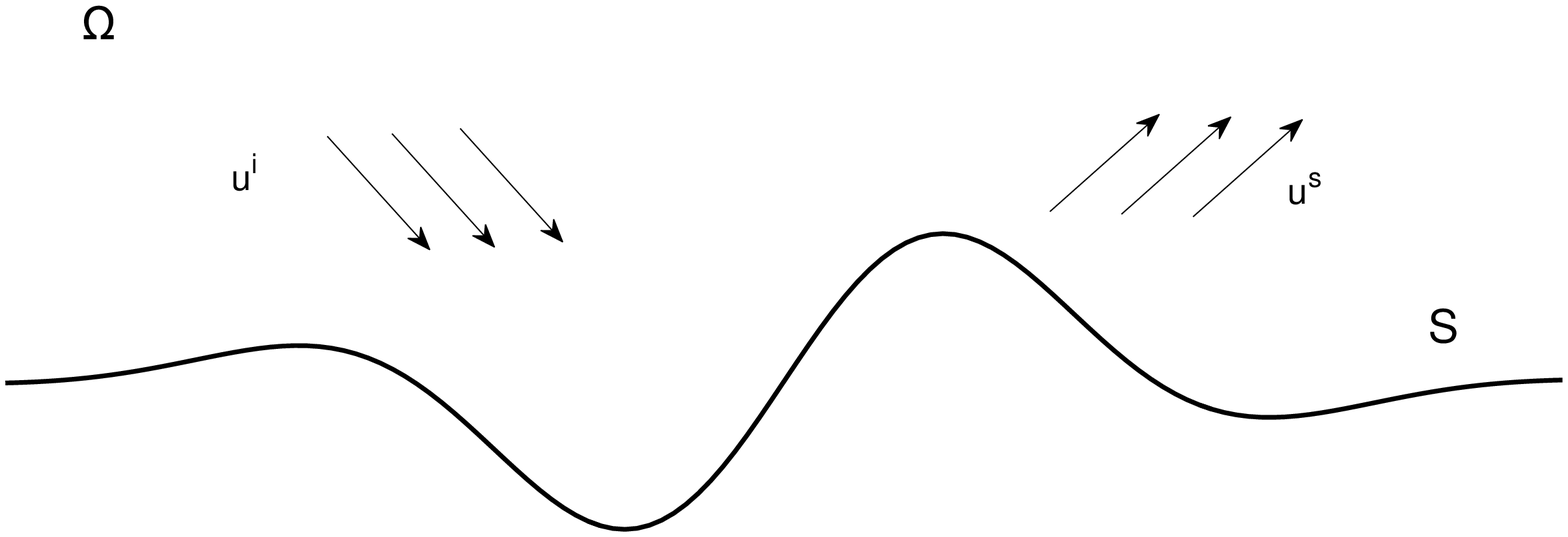}
\caption{The elastic scattering problem by a Dirichlet rough surface.
}\label{fig0}
\end{figure}

For inverse acoustic scattering problems by unbounded rough surfaces, many numerical reconstruction
methods have been developed to recover the unbounded surfaces,
such as the sampling method in \cite{AK03,YZZ12,YZZ13,YZZ16} for reconstructing periodic structures,
the time-domain probe method in \cite{BP09}, the time-domain point source method in \cite{CL05}
and the nonlinear integral equation method in \cite{LS14} for recovering sound-soft rough surfaces,
the transformed field expansion method in \cite{BL13,BL14} for reconstructing a small and smooth
deformation of a flat surface with Dirichlet, impedance or transmission conditions,
the Kirsch-Kress method in \cite{LSZ17} for a penetrable locally rough surface, the Newton
iterative method in \cite{ZZ13} for a sound-soft locally rough surface, and the direct imaging method
in \cite{LZZ18} for imaging impenetrable or penetrable unbounded rough surfaces from the scattered
near-field Cauchy data.

Compared with the acoustic case, elastic scattering problems are more complicated
due to the coexistence of the compressional and shear waves that propagate at different speeds.
This makes the elastic scattering problems more difficult to deal with theoretically and numerically.
For inverse scattering by bounded elastic bodies, several numerical inversion algorithms have been
proposed, such as the sampling method in \cite{A01a,BGC06,CKAGK07,NG04,YHXZ16}
and the transformed field expansion method in \cite{LW15,LWZ15,LWWZ16}.
See also the monograph \cite{ABG15} for a good survey.
However, as far as we know, not many results are available for inverse elastic scattering by unbounded
rough surfaces. An optimization method was proposed in \cite{EH12a} to reconstruct an elastically rigid
periodic surface, a factorization method was developed in \cite{HLZ13} for imaging an elastically rigid
periodic surface and in \cite{HKY16} for reconstructing bi-periodic interfaces between acoustic and
elastic media, and the transformed field expansion method was given in \cite{LWZ16} for imaging an
elastically rigid bi-periodic surface.

The purpose of this paper is to develop a direct imaging method for inverse elastic scattering problems
by an unbounded rigid rough surface (on which the elastic displacement vanishes).
The main idea of a direct imaging method is to present a certain imaging function to determine whether
or not a sampling point in the sampling area is on the obstacle.
Direct imaging methods usually require very little priori information of the obstacle,
and the imaging function usually involves only the measurement data but not the solution of the scattering
problem and thus can be computed easily and fast.
This is especially important for the elastic case since the computation of the elastic scattering solution
is very time-consuming. Due to the above features, direct imaging methods have recently been studied
by many authors for inverse scattering by bounded obstacles or inhomogeneity, such as the orthogonality
sampling method \cite{P10}, the direct sampling method \cite{IJZ12,LLSS13,LLZ14,L17}, the reverse time migration
method \cite{CCH13,CH14} and the references quoted there.
Recently, a direct imaging method was proposed in \cite{LZZ18} to reconstruct unbounded rough surfaces
from the acoustic scattered near-field Cauchy data generated by acoustic point sources.
This paper is a nontrivial extension of our recent work in \cite{LZZ18} from the acoustic case to the
elastic case since the elastic case is much more complicated than the acoustic case due to the coexistence
of the compressional and shear waves that propagate at different speeds.

To understand why our direct imaging method works, a theoretical analysis is presented, based on
an integral identity concerning the fundamental solution of the Navier equation (see Lemma \ref{HK})
which is established in this paper. This integral identity is similar to the {\em Helmholtz-Kirchhoff}
identity for bounded obstacles in \cite{CH14}. In addition, a reciprocity relation (see Lemma \ref{RR})
is proved for the elastic scattered field corresponding to the unbounded rough surface.
Based on these results, the required imaging function is then proposed, which, at each sampling point,
involves only the inner products of the measured data and the fundamental solution of the Navier equation
in a homogeneous background medium. Thus, the computation cost of the imaging function is very cheap.
Numerical results are consistent with the theoretical analysis and show that our imaging method can provide
an accurate and reliable reconstruction of the unbounded rough surface even in the presence of a
fairly large amount of noise in the measured data.

The remaining part of the paper is organized as follows.
In Section \ref{sec2}, we formulate the elastic scattering problem and present some inequalities
that will be used in this paper.
Moreover, the well-posedness of the forward scattering problem is presented,
based on the integral equation method (see \cite{A00,A01,A02}).
Section \ref{sec3} provides a theoretical analysis of the continuous imaging function.
Based on this, a novel direct imaging method is proposed for the inverse elastic scattering problem.
Numerical examples are carried out in Section \ref{sec4}
to illustrate the effectiveness of the imaging method.
In the appendix, we present two lemmas which are needed for the proof of the reciprocity relation.

We conclude this section by introducing some notations used throughout the paper.
For any open set $D\subset\R^m\,\,(m\in\N)$, denote by $BC(D)$ the set of
bounded and continuous, complex-valued functions in $D$, a Banach space under the
norm $\|\varphi\|_{\infty,D}:=\sup_{x\in D}|\varphi(x)|$.
Given a function $v\in L^\infty(D)$, denote by $\pa_j v$ the (distributional) derivative
$\pa v(x)/\pa x_j$, $j=1,\cdots,m$.
Define $BC^{1,1}(D):=\{\varphi\in BC(D)~|~\pa_j\varphi\in BC(D),j=1,\cdots,m\}$, equipped with
the norm $\|\varphi\|_{1,D}:=\|\varphi\|_{\infty,D}+\sum_{j=1}^m\|\pa_j\varphi\|_{\infty,D}$.
The space of H\"{o}lder continuous functions is denoted by
$C^{0,\alpha}(\ov{D}):=\{\varphi\in C(\ov{D}):\|\varphi\|_{0,\alpha;D}\}$ with the norm
$\|\varphi\|_{0,\alpha,D}:=\|\varphi\|_{\infty;D}+\sup_{x,y\in D,x\neq y}
[|\varphi(x)-\varphi(y)|/|x-y|^\alpha]$.
Denote by $C^{1,\alpha}(\R)$ the space of uniformly H\"{o}lder continuously differentiable functions
or the space of differentiable functions $\varphi$ for which $\nabla\varphi$ belongs to $C^{0,\alpha}(\R)$
with the norm $\|\varphi\|_{1,\alpha}:=\|\varphi\|_{\infty;D}+\|\nabla\varphi\|_{0,\alpha}$.
We will also make use of the standard Sobolev space $H^1(D)$ for any open set $D\subset\R^2$
and $H^{1/2}(\pa D)$ provided the boundary of $D$ is smooth enough.
The notations $H^1_{loc}(\pa D)$ and $H^{1/2}_{loc}(D)$ denote the functions
which are the elements of $H^{1/2}(\pa E)$ and $H^1(E)$ for any $E\subset\subset D$, respectively.

\section{Problem formulation}\label{sec2}
\setcounter{equation}{0}

In this section, we formulate the elastic scattering problem considered in this paper and present
its well-posedness results. Useful notations and inequalities used in the paper will also be given.

\subsection{The forward elastic scattering problem}

The propagation of time-harmonic waves with circular frequency $\omega$ in an elastic solid
with Lam\'e constants $\mu,\lambda ~(\mu >0,\lambda+\mu\geq 0)$ is governed by the Navier equation
\begin{equation}\label{naviereq}
\Delta^*{\bm u}+\omega^2{\bm u}=0.
\end{equation}
Here, ${\bm u}=(u_1,u_2)^T$ denotes the displacement field and
\ben
\Delta^*{\bm u}:=\mu\Delta{\bm u} +(\lambda+\mu)\text{grad div}\,{\bm u}.
\enn
We consider a two-dimensional unbounded rough surface $S:=\{x=(x_1, x_2)\in\R^2\;|\;x_2=f(x_1)\}$,
where $S$ is the surface profile function satisfying that $f\in BC^{1,1}(\R)$.
Denote by $\Om$ the region above $S$.
Throughout the paper, the letters $h$ and $H$ will be frequently used to denote certain real numbers
satisfying that $h<\inf f=:f_-$ and $H>\sup f=:f_+$.
For any $a\in\R$, introduce the sets
\ben
U_a:=\{{x}=(x_1,x_2)\in\mathbb{R}^2: x_2>a\},\qquad
T_a:=\{{x}=(x_1,x_2)\in\mathbb{R}^2: x_2=a\}.
\enn
Let $\Gamma({x,y})$ be the free-space Green's tensor for the two-dimensional Navier equation:
\be\label{Gamma}
\Gamma({x,y}):=\frac{1}{\mu} {\bm I} \varPhi_{k_s}({x,y})+
\frac{1}{\omega^2}\nabla^T_x\nabla_x \left(\varPhi_{k_s}({x,y})-\varPhi_{k_p}({x,y})\right),
\en
with ${x,y}\in\mathbb{R}^2,~{x\neq y}$, where the scalar function $\varPhi_{k}({x,y})$ is the fundamental
solution to the two-dimensional Helmholtz equation given by
\be\label{phi}
\varPhi_k({x,y}):=\frac{i}{4}H^{(1)}_0(k|{x-y}|), \quad {x\not=y},
\en
and the compressional wave number $k_p$ and the shear wave number $k_s$ are given by
\ben
k_p:={\omega}/{\sqrt{2\mu+\lambda}},\quad k_s:={\omega}/{\sqrt\mu}.
\enn
Motivated by the form of the corresponding Green's function for acoustic wave propagation, we define
the Green's tensor $\Gamma_{D,h}({x,y})$ for the first boundary value problem of elasticity
in the half-space $U_h$ by
\ben
\Gamma_{D.h}({x,y}):=\Gamma({x,y})-\Gamma({x,y'_h})+ U({x,y}),\quad {x,y}\in U_h,\;{x\neq y}.
\enn
Here, $y'_h=(y_1,2h-y_2)$ and the matrix function $U$ is analytic. For more properties of $U$
we refer to \cite[Chapter 2.4]{A00}.

Given a curve $\Lambda\subset\mathbb{R}^2$ with the unit normal vector ${\bm n}=(n_1,n_2)$,
the generalised stress vector ${\bm P}$ on $\Lambda$ is defined by
\be\label{pu}
{\bm P\bm u}:= (\mu +\tilde\mu)\frac{\pa{\bm u}}{\pa\bm n}
+\tilde\lambda{\bm n}~\text{div}~{\bm u}-\tilde\mu~{\bm n}^{\perp}\text{div}^{\perp}{\bm u},\quad \text{div}^{\perp}:=(-\pa_2,\pa_1).
\en
Here, $\tilde\mu,\tilde\lambda$ are real numbers satisfying $\tilde\mu+\tilde\lambda=\mu+\lambda$
and ${\bm n}^{\perp}:=(-n_2,n_1)$.

If we apply the generalised stress operator ${\bm P}$ to $\Gamma$ and $\Gamma_{D,h}$,
we obtain the matrix functions $\Pi^{(1)},\Pi^{(2)},\Pi^{(1)}_{D,h}$ and $\Pi^{(2)}_{D,h}$:
\begin{align}\label{Pi1}
\Pi^{(1)}_{jk}({x,y})&:=\left({\bm P}^{({x})}\left(\Gamma_{\cdot k}({x,y})\right)\right)_j,\\ \label{Pi2}
\Pi^{(2)}_{jk}({x,y})&:=\left({\bm P}^{({y})}\left(\Gamma_{j\cdot}({x,y})\right)^T\right)_k,\\ \label{Pi3}
\Pi^{(1)}_{D,h,jk}({x,y})&:=\left({\bm P}^{({x})}\left(\Gamma_{D,h,\cdot k}({x,y})\right)\right)_j,\\ \label{Pi4}
\Pi^{(2)}_{D,h,jk}({x,y})&:=\left({\bm P}^{({y})}\left(\Gamma_{D,h,\cdot j}({x,y})\right)^T\right)_k.
\end{align}
Given the incident wave $\bm u^i$, the scattered field ${\bm u^s}=\bm u-\bm u^i$ is required to fulfill the
{\em upwards propagating radiation condition (UPRC)} (see \cite{A01}):
\begin{equation}\label{uprc}
{\bm u}^s(x) = \int_{T_H}\Pi^{(2)}_{D.H}({x,y})\bm\phi({y})ds({y}), \quad{x}\in U_H
\end{equation}
with some $\bm\phi\in [L^{\infty}(\G_H)]^2$.

We reformulate the elastic scattering problem by the unbounded rough surface $S$
as the following boundary value problem:

\begin{problem} \label{pb1}
%(see \cite[Problem 4.15]{A00})
Given a  vector field ${\bm g}\in [BC(S)\cap H^{1/2}_{loc}(S)]^2$, find a vector field
$\bm{u}^s\in[C^2(\Omega)\cap C(\ol{\Omega})\cap H^1_{loc}(\Omega)]^2$ that satisfies
\begin{description}
\item 1) the Navier equation (\ref{naviereq}) in $\Omega$,
\item 2) the Dirichlet boundary condition $\bm u^s =\bm g$ on $S$,
\item 3) the vertical growth rate condition:
$
\sup_{{x}\in\Omega}|x_2|^\beta|{\bm u}^s(x)|<\infty
$
for some $\beta\in\mathbb{R}$,
\item 4) the UPRC (\ref{uprc}).
\end{description}
\end{problem}

\begin{remark}\label{rm1}{\rm
In Problem \ref{pb1}, the boundary data is determined by the incident wave ${\bm u}^i$,
that is, ${\bm {g}:=-{\bm u}}^{i}|_S$.
}
\end{remark}

\subsection{Some useful notations}

We give some basic notations and fundamental functions that will be needed in the subsequent discussions.

The fundamental solution $\G$ defined in (\ref{Gamma}) can be decomposed
into the compressional and shear components \cite{AR80}:
\ben
\G(x,y)=\G_p(x,y)+\G_s(x,y),\quad x,y\in \R^2,\quad x\not= 0,
\enn
where
\ben
\G_p(x,y)=-\frac{1}{\mu k_s^2}\nabla^T_x\nabla^T_x\varPhi_{k_p}(x,y)\quad\text{and}
\quad\G_s(x,y)= \frac{1}{\mu k_s^2}(k_s^2\bm I+\nabla^T_x\nabla^T_x)\varPhi_{k_s}(x,y).
\enn
The asymptotic behavior of these functions have been given in \cite[Proposition 1.14]{ABG15}:
\begin{align}\label{aaaa}
&\G_p(x,y)=\frac{k_p^2}{\om}\varPhi_{k_p}(x,y)\widehat{{y-x}}\otimes
         \widehat{{y-x}}+o\left({|x-y|}^{-1}\right),\\ \label{bbbb}
&\G_s(x,y)=\frac{k_s^2}{\om}\varPhi_{k_s}(x,y)\left(\bm I-\widehat{{y-x}}\otimes
          \widehat{{y-x}}\right)+o\left({|x-y|}^{-1}\right),\\ \label{cccc}
&\Pi^{(1)}_{\alpha}(x,y)\bm q= i\omega^2/k_{\alpha}\G_{\alpha}(x,y)\bm q + o\left({|x-y|}^{-1}\right),
\end{align}
where $\alpha=s,p$, $\Pi^{(1)}_\alpha$ is defined in (\ref{Pi1}) with $\bm n(x)={(x-y)}/{|x-y|}$
and for any vector $\bm v=(v_1,v_2)^T,$
\be\label{otimes}
\bm v\otimes \bm v = \bm v\bm v^T = \left[
\begin{matrix}
v_1^2&v_1v_2\\
v_2v_1&v_2^2
\end{matrix}
\right].
\en
From \cite[(2.14) and (2.29)]{A00} there exists a constant $C>0$ such that for ${x,y}\in U_h,$
\begin{align}\label{Gammajks}
\max\limits_{j,k=1,2}|\Gamma_{jk}({ x,y})| \leq C(1+|\log |{ x-y}|),\\ \label{Gammadhjks}
\max\limits_{j,k=1,2}|\Gamma_{D,h,jk}({ x,y})|\leq C(1+|\log |{ x-y}|).
\end{align}
Further, we have the following asymptotic property of the Hankel functions and their derivatives \cite{CK}:
For fixed $n\in\mathbb{N}$,
\be\label{bessel}
H^{(1)}_n(t)&=&\sqrt{\frac{2}{\pi t}}e^{ i(t-\frac{n\pi}{2}-\frac{\pi}{4})}
\left\{1+ O\left(\frac{1}{t}\right)\right\},\quad t\to \infty, \\ \label{bessel-}
H^{(1)'}_n(t)&=&\sqrt{\frac{2}{\pi t}}e^{i(t-\frac{n\pi}{2}+\frac{\pi}{4})}
\left\{1+ O\left(\frac{1}{t}\right)\right\},\quad t\to\infty.
\en
By (\ref{bessel}) and (\ref{bessel-}), we can deduce that
\be\label{asyGamma}
\G({x,y})\leq C(k_s,k_p,\delta)\left[\begin{matrix}
|{ x-y}|^{-\frac{1}{2}}&|{ x-y}|^{-\frac{3}{2}}\\
|{ x-y}|^{-\frac{3}{2}}&|{ x-y}|^{-\frac{1}{2}}
\end{matrix}\right]
\quad \text{as}\quad |x_1|\rightarrow\infty
\en
for ${x,y}\in U_h$ with $|x_1-y_1|>\delta$, where $\delta>0$ is a constant.
Combining this with (\ref{pu})-(\ref{Pi2}) gives
\be\label{asyPi}
\Pi^{(1)}({x,y})\leq C(k_s,k_p,\delta)\left[\begin{matrix}
|{x-y}|^{-\frac{3}{2}}&|{x-y}|^{-\frac{1}{2}}\\
|{x-y}|^{-\frac{1}{2}}&|{x-y}|^{-\frac{3}{2}}
\end{matrix}\right]
\quad \text{as}\quad |x_1|\rightarrow\infty.
\en
Further, we also need the following inequalities (see \cite[Theorems 2.13 and 2.16 (a)]{A00}:
\be\label{asyGammadh}
&&\max\limits_{j,k=1,2}|\G_{D,h,jk}(x,y)|
   \le\frac{\mathscr{H}(x_2-h,y_2-h)}{|x_1-y_1|^{3/2}},\\ \label{asyGammadh1}
&&\max\limits_{j,k=1,2}|\Pi^{(1)}_{D,h,jk}(x,y)|
   \le\frac{\mathscr{H}(x_2-h,y_2-h)}{|x_1-y_1|^{3/2}},\\ \label{asyGammadh2}
&&\max\limits_{j,k=1,2}|\Pi^{(2)}_{D,h,jk}(x,y)|
   \le\frac{\mathscr{H}(x_2-h,y_2-h)}{|x_1-y_1|^{3/2}},\\ \label{asyGammadhx}
&&\sup_{x\in T_H}\int_{S}\max\limits_{j,k=1,2}|\mathscr{G}_{jk}(x,y)|ds(y)<\infty
\en
for $|x_1-y_1|>\delta>0$, where $\mathscr{G}_{jk}$ denotes any derivative with respect to $x$
of $\Pi^{(2)}_{D,h,jk}$ and $\mathscr{H}\in C(\R^2)$.
For detailed properties of $\G$, $\G_{D,h}$, $\Pi^{(i)}$ and $\Pi^{(i)}_{D,h},~i=1,2$ can be found
in \cite[Section 2.3-2.4]{A00}.

\subsection{Well-posedness of the forward scattering problem}

The well-posedness of the elastic scattering problem reformulated as Problem \ref{pb1} has been studied
in \cite{A00,A01,A02} by the integral equation method or in \cite{EH12,EH15} by a variational approach.
We present the well-posedness results obtained in \cite{A00,A01,A02} by the integral equation method,
which will be needed in this paper. To this end, we introduce the elastic layer potentials.
For a vector-valued density ${\bm\varphi}\in[BC(S)]^2$, we define the elastic single-layer potential by
\be\label{V}
{\bm{V\varphi}(x)}:=\int_{S}\G_{D,h}(x,y){\bm\varphi}(y)ds(y)\quad{x}\in U_h\ba S,
\en
and the elastic double-layer potential by
\be\label{W}
{\bm{W\varphi}(x)}:=\int_{S}\Pi^{(2)}_{D,h}(x,y){\bm\varphi}(y)ds(y)\quad {x}\in U_h\ba S.
\en
From the properties of $\G_{D,h}$ and $\Pi^{(2)}_{D,h}$ (see \cite[Chapter 2]{A00}) it follows
that the above integrals exist as improper integrals. Further, it is easy to verify that
the potentials $\bm{V\varphi}$ and $\bm{W\varphi}$ are solutions to the Navier equation in both
$U_h\backslash\ov{\Omega}$ and $\Omega$.
We now seek a solution to Problem \ref{pb1} in the form of a combined single- and double-layer potential
\be\label{ux}
{\bm u}^s(x)=\int_S\left\{\Pi^{(2)}_{D,h}(x,y)-i\eta\G_{D,h}(x,y)\right\}{\bm\varphi}(y)ds(y),\quad{x}\in\Omega,
\en
where ${\bm\varphi}\in [BC(S)\cap H^{1/2}_{loc}(S)]^2$ and $\eta$ is a complex number with $\Rt(\eta)>0$.
From \cite[pp. 10]{A02} we know that $\bm u^s$ is a solution to the boundary value Problem \ref{pb1}
if $\bm\varphi$ is a solution to the integral equation
\be\label{eq1}
\frac{1}{2}\bm\varphi({x})+\int_S\left\{\Pi^{(2)}_{D,h}({x,y})-i\eta\G_{D,h}({x,y})\right\}\bm\varphi(y)ds(y)
=-{\bm g(x)}, \quad{x}\in S.
\en
Introduce the three integral operators ${\bm S}_f$, ${\bm D}_f$ and ${\bm D'}_f$:
\ben
{\bm S}_f\bm\phi(s)&:=&2\int_{-\infty}^{\infty}\G_{D,h}((s,f(s)),(t,f(t)))\bm\phi(t)\sqrt{1+f'(t)^2}dt, \\
{\bm D}_f\bm\phi(s)&:=&2\int_{-\infty}^{\infty}\Pi^{(2)}_{D,h}((s,f(s)),(t,f(t)))\bm\phi(t)\sqrt{1+f'(t)^2}dt,\\
{\bm D}'_f\bm\phi(s)&:=&2\int_{-\infty}^{\infty}\Pi^{(1)}_{D,h}((s,f(s)),(t,f(t)))\bm\phi(t)\sqrt{1+f'(t)^2}dt,
\enn
where $s\in\R$ and $\bm\phi\in[L^{\infty}(\R)]^2$. From \cite[Theorems 3.11 (c) and 3.12 (b)]{A00}, we know
that these three operators are bounded mappings from $[BC(\R)]^2$ into $[C^{0,\alpha}(\R)]^2$.
Set $\bm\phi(s)=\bm\varphi(s,f(s))$. Then the integral equation (\ref{eq1}) is equivalent to
\be \label{eq2}
({\bm I}+{\bm D}_f - i \eta {\bm S}_f)\phi(s)=-2{\bm g}(s,f(s)), \quad s \in \mathbb{R}.
\en
The unique solvability of (\ref{eq2}) was shown in \cite{A00,A01,A02} in both the space of bounded and continuous
functions and $L^p(\R)$, $1\le p\le\infty$, yielding existence of solutions to Problem \ref{pb1}.
We just present the conclusions that will be used in the next sections; see \cite[Chapter 5]{A00}
or \cite{A01,A02} for details.

\begin{theorem}\label{thm1} (\cite[Corollary 5.23]{A00} or \cite[Corollary 5.12]{A02})
The operator ${\bm I}+{\bm D}_f-i\eta{\bm S}_f$ is bijective on $[BC(\R)]^2$, and there holds
\ben
\|({\bm I}+{\bm D}_f - i \eta {\bm S}_f)^{-1}\|<\infty.
\enn
\end{theorem}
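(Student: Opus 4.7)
The plan is to combine a uniqueness argument for Problem \ref{pb1} with a Fredholm-type analysis adapted to an unbounded surface. The reason the combined single/double layer ansatz is taken with $\Rt(\eta)>0$ is precisely that this sign condition is what rules out spurious interior modes on the fictitious domain, and hence forces triviality of the null space; the compactness failure inherent to the unbounded geometry is dealt with separately, via decay of the half-space Green's tensor $\G_{D,h}$.

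For injectivity, I would let $\bm\varphi\in[BC(\R)]^2$ lie in the kernel of $\bm I+\bm D_f-i\eta\bm S_f$ and form the combined-layer potential
\be\no
\bm u^s(x)=\int_S\bigl\{\Pi^{(2)}_{D,h}(x,y)-i\eta\,\G_{D,h}(x,y)\bigr\}\bm\varphi(y)\,ds(y),\quad x\in\Omega.
\en
The null-space relation is the jump identity for the boundary trace, so $\bm u^s|_S=0$; the estimates (\ref{asyGammadh})--(\ref{asyGammadhx}) guarantee that $\bm u^s$ decays enough to fulfil the growth condition and the UPRC. Uniqueness for Problem \ref{pb1} then yields $\bm u^s\equiv 0$ in $\Omega$. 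To extract $\bm\varphi\equiv 0$, I would extend the same potential to the complementary region $U_h\setminus\overline{\Omega}$, use the standard jump relations for the elastic single- and double-layer potentials to compute $[\bm u^s]$ and $[\bm P\bm u^s]$ across $S$ in terms of $\bm\varphi$, and then apply Betti's first formula on a bounded subdomain $U_h\cap\{|x_1|<R\}\setminus\overline{\Omega}$. The boundary terms on the vertical cut-offs vanish in the limit $R\to\infty$ by the $|x_1-y_1|^{-3/2}$ decay in (\ref{asyGammadh})--(\ref{asyGammadh2}), and taking imaginary parts kills the elastic bilinear form and leaves a term proportional to $\Rt(\eta)\int_S|\bm\varphi|^2\,ds$, which forces $\bm\varphi=0$.

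For surjectivity and boundedness of the inverse, the key obstacle is that neither $\bm S_f$ nor $\bm D_f$ is compact on $[BC(\R)]^2$: the surface is unbounded and the identity cannot be perturbed by a compact operator in the classical Riesz sense. Following the strategy of Arens and Chandler-Wilde, I would exploit the $|x_1-y_1|^{-3/2}$ kernel decay to write $\bm D_f-i\eta\bm S_f$ as the norm limit of truncated operators $\bm K_n$ whose kernels are supported in $|x_1-y_1|\le n$, invoke Anselone's collectively compact operator theory to treat $\{\bm K_n\}$, and use local H\"older continuity of $\bm S_f\bm\phi$ and $\bm D_f\bm\phi$ (from the $[C^{0,\alpha}(\R)]^2$ mapping stated after the definition of these operators) to obtain local equicontinuity of approximate solutions. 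The combination of local compactness plus the injectivity above delivers the uniform a priori estimate $\|\bm\varphi\|_\infty\le C\|\bm g\|_\infty$, which gives both surjectivity (through a continuation-in-$\eta$ or Neumann-series argument off a small-$\eta$ regime) and the bound on $(\bm I+\bm D_f-i\eta\bm S_f)^{-1}$.

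The hardest step, as usual for unbounded rough-surface problems, will be the a priori estimate. I expect to have to argue by contradiction: if no such $C$ existed, pick $\bm\varphi_n$ with $\|\bm\varphi_n\|_\infty=1$ but $\|(\bm I+\bm D_f-i\eta\bm S_f)\bm\varphi_n\|_\infty\to 0$, then translate $\bm\varphi_n$ so that $|\bm\varphi_n(x_n)|\ge 1/2$ at some point $x_n$, and pass to a limit in both the densities and the translated surface profiles $f(\cdot+x_{n,1})$ (which stay in a bounded set of $BC^{1,1}(\R)$). The local equicontinuity allows extraction of a locally uniform limit $\bm\varphi_\infty\ne 0$ on some limit rough surface $f_\infty$, producing a nontrivial kernel element there and contradicting the already established injectivity applied uniformly in the profile. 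Making this translation-compactness argument work while controlling the half-space Green's tensor under the profile shift is the real technical core of the proof.
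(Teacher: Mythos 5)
The paper does not actually prove this theorem: it is imported verbatim from Arens's work (\cite[Corollary 5.23]{A00}, \cite[Corollary 5.12]{A02}), and Section \ref{sec2} explicitly defers to those references for the argument. So there is no in-paper proof to compare against; what can be judged is whether your sketch reconstructs the proof in the cited sources. In outline it does. The injectivity step --- a kernel element generates a combined-layer potential solving the homogeneous Problem \ref{pb1}, uniqueness gives $\bm u^s\equiv 0$ in $\Omega$, and the jump relations plus Betti's formula in $U_h\setminus\overline{\Omega}$ together with $\Rt(\eta)>0$ force $\bm\varphi=0$ --- is exactly the Chandler-Wilde/Ross/Arens argument (one also uses that $\G_{D,h}$ and $\Pi^{(2)}_{D,h}$ vanish for $x$ on the artificial plane $T_h$, so no boundary term survives there; you omit this but it is a detail). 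The surjectivity step is in the right spirit but contains one genuine inaccuracy: the truncated operators $\bm K_n$ do \emph{not} converge to $\bm D_f-i\eta\bm S_f$ in operator norm on $[BC(\R)]^2$ --- if they did, classical Riesz theory would essentially apply and the whole difficulty would evaporate. They converge only strongly (pointwise on densities), which is precisely why Arens must invoke the \emph{generalised} collectively compact operator theory of Chandler-Wilde and Zhang rather than Anselone's classical version, and why invertibility is first established for flat surfaces by explicit Fourier analysis and then transported to general $f\in BC^{1,1}(\R)$ by the limit-operator/translation argument. With that correction, your final paragraph (translation compactness, extraction of a nontrivial kernel element on a limit profile, contradiction with injectivity holding uniformly over the admissible family of profiles) is indeed the technical core of the cited proof, so your reconstruction is a fair account of where the real work lies even though none of it appears in this paper.
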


For the original scattering problem formulated as a boundary value problem, Problem \ref{pb1},
we have the following well-posedness result.

\begin{theorem}\label{thmwp} (\cite[Theorem 5.24]{A00} or \cite[Theorem 5.13]{A02})
For any Dirichlet data ${\bm g} \in [BC(S) \cap H^{1/2}_{loc}(S)]^2 $, there exists a unique solution
${\bm u}^s\in [C^2(\Omega) \cap C(\ol{\Omega}) \cap H^1_{loc}(\Omega)]^2$ to Problem \ref{pb1}
which depends continuously on $\|{\bm g}\|_{\infty;S}$, uniformly in $[C(\ov{\Omega}\setminus U_H)]^2$
for any $H>\sup f$.
\end{theorem}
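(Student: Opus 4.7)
The plan is to prove the theorem by the combined layer potential ansatz already set up in the excerpt, reducing everything to the unique solvability of the integral equation~(\ref{eq2}) via Theorem~\ref{thm1}. First I would handle existence: given $\bm g\in [BC(S)\cap H^{1/2}_{loc}(S)]^2$, set $\tilde{\bm g}(s):=-2\bm g(s,f(s))$, which lies in $[BC(\R)]^2$ because $f\in BC^{1,1}(\R)$ and $\bm g\in [BC(S)]^2$. Theorem~\ref{thm1} then supplies a unique $\bm\phi\in [BC(\R)]^2$ with $(\bm I+\bm D_f-i\eta\bm S_f)\bm\phi=\tilde{\bm g}$, and the natural bound
\begin{equation*}
\|\bm\phi\|_{\infty;\R}\le 2\,\|(\bm I+\bm D_f-i\eta\bm S_f)^{-1}\|\,\|\bm g\|_{\infty;S}.
\end{equation*}
Defining $\bm\varphi(s,f(s)):=\bm\phi(s)$ and inserting into the representation~(\ref{ux}) produces the candidate scattered field $\bm u^s$, which by construction satisfies the Navier equation in $\Omega$ (layer potentials solve~(\ref{naviereq}) off $S$), the UPRC~(\ref{uprc}) (with density obtained from $\bm\varphi$ by the standard Green-representation argument in $\Omega\cap U_H$), and the vertical growth rate (in fact boundedness in $x_2$) thanks to the decay estimates~(\ref{asyGammadh})--(\ref{asyGammadh2}). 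The Dirichlet trace equals $\bm g$ by the jump relations for $\bm{V\bm\varphi}$ and $\bm{W\bm\varphi}$ collected in \cite{A00}.

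Next I would address regularity. Away from $S$, $\bm u^s\in C^\infty(\Omega)$ by interior elliptic regularity for the Navier equation. The continuity up to $S$ follows from the continuity of the single-layer potential and the classical jump relations for the double-layer, using that $\bm\varphi\in [BC(S)]^2\cap [C^{0,\alpha}(S)]^2$ (the H\"older continuity comes from mapping properties of $\bm D_f, \bm S_f$ from $[BC(\R)]^2$ into $[C^{0,\alpha}(\R)]^2$ cited in the excerpt). The $H^1_{loc}(\Omega)$ membership follows from the additional assumption $\bm g\in [H^{1/2}_{loc}(S)]^2$ combined with the variational interpretation in \cite{EH12,EH15}, or equivalently from local mapping properties of the elastic layer potentials.

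For uniqueness of the boundary value problem, I would argue that if $\bm g\equiv 0$ then $\bm u^s$ is a radiating solution vanishing on $S$. Applying the Green representation formula in the strip $\Omega\cap U_H$ and using the UPRC to identify $\bm u^s$ in $U_H$ with an integral over $T_H$ (of the type~(\ref{uprc})), together with the vanishing Dirichlet data on $S$, shows that the induced boundary density must solve the homogeneous version of~(\ref{eq2}); Theorem~\ref{thm1} then forces $\bm\varphi\equiv 0$, hence $\bm u^s\equiv 0$. (Alternatively, a Rellich-type identity on $T_a$ for $a>\sup f$ combined with the UPRC yields $\bm u^s\equiv 0$, which is the route taken in \cite{A01,A02}.) Finally, the uniform continuous dependence in $[C(\ov\Omega\setminus U_H)]^2$ is obtained by inserting the bound on $\|\bm\phi\|_{\infty;\R}$ into~(\ref{ux}) and using~(\ref{asyGammadh})--(\ref{asyGammadh2}) to control the integrals uniformly for $x\in\ov\Omega\setminus U_H$.

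The main obstacle is the uniqueness step, since the unboundedness of $S$ prevents any compactness-based argument and classical Rellich lemmas do not apply directly; the work is in showing that the UPRC plus the vertical growth condition are strong enough to force $\bm u^s\equiv 0$ when $\bm g\equiv 0$. All of this machinery, however, is already developed in \cite{A00,A01,A02}, so the proof reduces to assembling these ingredients in the order above.
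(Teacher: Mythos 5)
The paper does not actually prove this theorem --- it is quoted verbatim from Arens \cite{A00,A02} --- and your sketch follows the same integral-equation route used there (combined single- and double-layer ansatz, invertibility of $\bm I+\bm D_f-i\eta\bm S_f$ from Theorem \ref{thm1}, and a Rellich-type uniqueness argument), so it is consistent with the cited proof. The one soft spot is your first uniqueness argument: a solution of the homogeneous Problem \ref{pb1} is not a priori of layer-potential form, so uniqueness of the integral equation does not directly transfer to the boundary value problem; however, you correctly identify the Rellich-identity argument of \cite{A01,A02} as the actual route, so this does not amount to a substantive gap.
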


%%%%%%%%%%%%%%%%%%%%%%%%%%%%%%%%%%%%%%%%%%%%%%%%%%%%%%%%%%%%%%%%%%%%%%%%%%%%%%%%%%%%%%%%%%%%%%%%%%%%%%
\section{The imaging algorithm}\label{sec3}
\setcounter{equation}{0}

%We first introduce the inverse scattering problem concerned in this paper.
For $H>f_+$ and $A>0$ let $T_{H,A}:=\{x\in\Om\,|\,x_2=H,|x_1|<A\}$ be a horizontal line segment above
the rough surface $S$. Then our inverse scattering problem is to determine $S$ from the scattered near-field
Cauchy data $\{(\bm{u}^s(x;y),\bm{P}\bm{u}^s(x;y))\;|\;x ,y\in T_H\}$ corresponding to the
elastic incident point sources $\bm{u}^i(x;y,\bm{e}_j):=\Gamma(x;y)\bm{e}_j, j=1,2$.
Here, $\bm e_1=(1\;0)^T, \bm e_2=(0\;1)^T$.

We will present a novel direct imaging method to recover the unbounded rough surface $S$.
To this end, we first establish certain results for the forward scattering problem
associated with incident point sources. The imaging function for the inverse problem will then be given
at the end of this section.
In the following proofs, the constant $C>0$ may be different at different places.

\begin{lemma}\label{lem2301}
Assume that ${\bm u}^s$ is the solution to Problem \ref{pb1} with the boundary data ${\bm g}=(g_1,g_2)^T$.
If ${\bm g}\in[L^p({S})]^2$ with $1\leq p\leq\infty$,
then ${\bm u}^s,{\bm{Pu}^s}\in [L^r(T_H)]^2$ for any $H-f_+\ge\delta>0$ and ${1}/{r}<{1}/{2}+{1}/{p}$.
\end{lemma}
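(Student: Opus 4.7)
The plan is to invoke the combined-layer representation (\ref{ux}) for
$\bm u^s$, reduce the problem to a one-dimensional convolution estimate
against a kernel decaying like $|x_1-t|^{-3/2}$, and then apply Young's
inequality (possibly in weak-type form) to recover the full range of
Lebesgue exponents claimed. First, the $L^p$-version of
Theorem~\ref{thm1} asserted in \cite{A01,A02} (for every
$1\le p\le\infty$) yields a density $\bm\varphi\in[L^p(\mathbb R)]^2$
with $\|\bm\varphi\|_{L^p(\mathbb R)}\le C\|\bm g\|_{L^p(S)}$ such
that (\ref{ux}) holds. Evaluating (\ref{ux}) at $x=(x_1,H)\in T_H$
and parameterising $S$ by $y(t)=(t,f(t))$ gives
\begin{equation*}
\bm u^s(x_1,H)=\int_{\mathbb R}\bm K(x_1,t)\,\bm\varphi(t)\,dt,\quad
\bm K(x_1,t):=\bigl[\Pi^{(2)}_{D,h}(x,y(t))-i\eta\Gamma_{D,h}(x,y(t))\bigr]
\sqrt{1+f'(t)^2},
\end{equation*}
and differentiating in $x$ under the integral sign produces the
analogous representation for $\bm P\bm u^s$ with kernel $\bm P^{(x)}\bm K$.

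Next I would establish the uniform pointwise bound
$|\bm K(x_1,t)|+|\bm P^{(x)}\bm K(x_1,t)|\le C(\delta,f,h)(1+|x_1-t|)^{-3/2}$
for all $x_1,t\in\mathbb R$. The separation $H-f_+\ge\delta>0$ is
essential here: it guarantees $|x-y(t)|\ge\delta$ uniformly in $t$,
so the logarithmic estimate (\ref{Gammadhjks}) gives a uniform bound
when $|x_1-t|\le 1$, while (\ref{asyGammadh})--(\ref{asyGammadh2})
provide $|x_1-t|^{-3/2}$ decay when $|x_1-t|>1$; the corresponding
bound for $\bm P^{(x)}\bm K$ follows from (\ref{asyGammadh1}) and
(\ref{asyGammadhx}). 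Consequently both $\bm u^s$ and $\bm P\bm u^s$
are pointwise dominated by $\tilde K\ast|\bm\varphi|$ with
$\tilde K(s):=(1+|s|)^{-3/2}$, and $\tilde K\in L^q(\mathbb R)$ for
every $q>2/3$ (in particular $\tilde K\in L^{2/3,\infty}(\mathbb R)$).

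The final step is the convolution inequality
$\|\tilde K\ast|\bm\varphi|\|_{L^r(\mathbb R)}\le C\|\bm\varphi\|_{L^p(\mathbb R)}$.
Young's inequality with $1+1/r=1/q+1/p$ and $q\in[1,\infty]$ covers
the range $r\ge p$, which is the part of the claimed range lying
below $1/p$. The complementary sub-range $1/p<1/r<1/p+1/2$
(corresponding to $r<p$) is the main technical obstacle: here $q<1$
in the Young exponent relation, so I would pass to weak Lebesgue
spaces, invoke the weak-type Young--O'Neil convolution inequality
based on $\tilde K\in L^{2/3,\infty}$ to obtain the endpoint mapping
$L^p\to L^{r_0,\infty}$ with $1/r_0=1/p+1/2$, and then recover each
$r\in(r_0,p)$ by Marcinkiewicz interpolation against the strong
endpoint $L^p\to L^p$. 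The sharp exponent $1/2$ in $1/r<1/2+1/p$ is
dictated by exactly this weak-$L^{2/3}$ membership of $|s|^{-3/2}$
in one dimension, so this Lorentz-space step is really the heart of
the argument.
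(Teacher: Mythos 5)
Your first two steps coincide with the paper's own argument: the paper likewise represents $\bm u^s$ by the combined layer potential (\ref{ux}) with a density $\bm\varphi\in[L^p(S)]^2$ solving the boundary integral equation, bounds the kernel and its image under $\bm P^{(x)}$ by $\mathscr{H}(x_2-h,y_2-h)|x_1-y_1|^{-3/2}$ via (\ref{asyGammadh})--(\ref{asyGammadhx}), and then appeals to Young's inequality; your handling of the near-diagonal region through the separation $H-f_+\ge\delta$ and the logarithmic bound (\ref{Gammadhjks}) is in fact more explicit than the paper's. The genuine gap sits exactly where you place ``the heart of the argument'': the sub-range $1/p<1/r<1/p+1/2$, i.e.\ $r<p$.

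Your proposed repair of that sub-range does not work. The weak-type Young--O'Neil inequality $L^p\ast L^{q,\infty}\to L^{r,\infty}$ with $1+1/r=1/p+1/q$ is valid only for $q>1$; at $q=2/3$ it is false, and no subsequent Marcinkiewicz interpolation can rescue it. More decisively, \emph{no} argument based on the nonnegative majorant $\tilde K(s)=(1+|s|)^{-3/2}$ can reach $r<p$: since $\tilde K\ge c>0$ on $|s|\le1$, one has $(\tilde K\ast|\bm\varphi|)(s)\ge c\int_{|t-s|\le1}|\bm\varphi(t)|\,dt$, and the local averaging operator does not map $L^p$ into $L^r$ for any $r<p$ (test with $\bm\varphi(t)=(1+|t|)^{-1/p}(\log(2+|t|))^{-2/p}\in L^p(\R)$, whose local averages are comparable to $\bm\varphi$ itself and hence lie in no $L^r$ with $r<p$). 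So the pointwise kernel bound yields only $r\ge p$; obtaining anything in the range $r<p$ would require exploiting cancellation in $\Pi^{(2)}_{D,h}-i\eta\Gamma_{D,h}$, which neither you nor the Lorentz-space machinery does. For fairness, the paper's own proof is no better here: it asserts the kernel lies in ``$L^{3/2+\vep}(\R^2)$'' (the exponent consistent with its conclusion is $2/3+\vep$) and then applies ``Young's inequality'' with this formal $q<1$, which is exactly the step you correctly isolated as problematic. Note also that the sub-range $r<p$ is precisely what Corollary \ref{asym} later consumes (it needs some $r<2$ starting from $p=2+\vep_1$), so the gap cannot be dismissed by retreating to the classical Young range $r\ge p$.
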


\begin{proof}
From (\ref{ux}), the scattered field ${\bm u}^s$ can be written in the form
\be\label{usx}
{\bm u}^s(x)=\int_S\left\{\Pi^{(2)}_{D,h}({x,y})-i\eta\G_{D,h}({x,y})\right\}
\bm\varphi(y)ds(y),\quad{x}\in\Omega,
\en
where $\bm\varphi\in[L^p({S})]^2$ is the unique solution to the boundary integral equation (\ref{eq1}).

The inequalities (\ref{asyGammadh}) and (\ref{asyGammadh2}) imply that
\ben
\max\limits_{j,k=1,2}|\Pi^{(2)}_{D,h,jk}(x,y)-i\eta\Gamma_{D,h,jk}(x,y)|
\leq\frac{\mathscr{H}(x_2-h,y_2-h)}{|x_1-y_1|^{3/2}}\in L^{{3}/{2}+\vep}(\R^2),\;\;\forall\vep>0.
\enn
This, combined with (\ref{usx}) and Young's inequality, implies that
\ben
{\bm u}^s\in [L^r(T_H)]^2\quad\text{with}\quad {1}/{r}<{1}/{2}+ {1}/{p}.
\enn
Further, by (\ref{usx}) we deduce that for ${x}\in T_H$
\ben
{\bm P^{(x)}}{\bm u}^s(x)&=&{\bm P}^{(x)}\int_S\left\{\Pi^{(2)}_{D,h}(x,y)
   -i\eta\G_{D,h}(x,y)\right\}\bm\varphi(y)ds(y),\\
&=&\int_S{\bm P}^{(x)}\left(\Pi^{(2)}_{D,h}(x,y)\varphi(y)\right)ds(y)
-i\eta\int_S\Pi_{D,h}^{(1)}(x,y)\bm\varphi(y)ds(y).
\enn
Using this equation and the inequalities (\ref{asyGammadh1}) and(\ref{asyGammadhx}) and arguing similarly
as above, we obtain that ${\bm P\bm u^s}\in [L^r(T_H)]^2$.
The lemma is thus proved.
\end{proof}

\begin{corollary}\label{asym}
For $y\in\Om$ let ${\bm u}^s(x;y,{\bm e_j})$ be the scattered field associated with the rough
surface $S$ and the incident point source ${\bm u}^i(x;y,\bm e_j):=\G(x,y)\bm e_j$ located at $y$
with polarization $\bm e_j,\;j=1,2.$ Then
$$
{\bm u}^s(\cdot;y,\bm e_j)\cdot{\ov{{\bm P\bm u}^i(\cdot;y,\bm e_j)}},\;\;
{\bm P\bm u}^s(\cdot;y,\bm e_j)\cdot{\ov{{\bm u}^i(\cdot;y,e_j)}}\in L^1(T_H),\;\;j=1,2
$$
for any $H-f_+\ge\delta>0$.
\end{corollary}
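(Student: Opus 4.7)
The plan is to apply Lemma \ref{lem2301} to lift $L^p$ integrability of the incident point source's Dirichlet trace on $S$ into $L^r$ integrability of $\bm u^s$ and $\bm P\bm u^s$ on $T_H$, and then to pair this with the known decay of $\G$ and $\Pi^{(1)}$ on $T_H$ via H\"older's inequality. First I would verify that $\bm g:=-\bm u^i(\cdot;y,\bm e_j)|_S=-\G(\cdot,y)\bm e_j|_S$ lies in $[L^p(S)]^2$ for every $p>2$: since $y\in\Om$ implies $y\notin S$, the restriction is smooth; and since $f\in BC^{1,1}(\R)$ is bounded, $|x-y|\sim|x_1|$ along $S$ at infinity, so (\ref{asyGamma}) forces the decay $|\bm g(x)|\lesssim|x_1|^{-1/2}$, which is $L^p$-integrable on $S$ exactly when $p>2$. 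Applying Lemma \ref{lem2301} with a finite $p\in(2,4)$ (say $p=3$) then places $\bm u^s(\cdot;y,\bm e_j)$ and $\bm P\bm u^s(\cdot;y,\bm e_j)$ in $[L^r(T_H)]^2$ for some $r\in(1,2)$ via the strict inequality $1/r<1/2+1/p$; for instance $r=4/3$ is admissible when $p=3$.

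Next, assuming $y\notin T_H$ (otherwise $\bm P\bm u^i$ carries a non-integrable $|x-y|^{-1}$ singularity on $T_H$ and the first product must be interpreted in a different sense), both $\bm u^i(\cdot;y,\bm e_j)$ and $\bm P\bm u^i(\cdot;y,\bm e_j)$ are bounded on any bounded piece of $T_H$, and by (\ref{asyGamma}) and (\ref{asyPi}) decay like $|x_1|^{-1/2}$ as $|x_1|\to\infty$. Hence they lie in $[L^{r'}(T_H)]^2$ for every $r'>2$, in particular for the H\"older conjugate $r'=r/(r-1)\in(2,\infty)$ of the exponent from the previous step. H\"older's inequality then gives $\bm u^s\cdot\overline{\bm P\bm u^i}\in L^1(T_H)$, and the analogous argument with the roles of $\bm u^i$ and $\bm u^s$ (together with $\bm P$) interchanged yields $\bm P\bm u^s\cdot\overline{\bm u^i}\in L^1(T_H)$.

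The sole point requiring care is the exponent arithmetic: invoking Lemma \ref{lem2301} with $p=\infty$ would only produce $r>2$, which cannot be H\"older-paired with the $L^{r'}(T_H)$ integrability available for $\bm P\bm u^i$ (which holds only for $r'>2$), so one must choose $p\in(2,4)$ in the lemma to land in the regime $r\in(1,2)$ and hence $r'\in(2,\infty)$, matching the available decay of $\G$ and $\Pi^{(1)}$ on $T_H$. Beyond this exponent bookkeeping, the argument is a direct application of Lemma \ref{lem2301} together with H\"older's inequality, with no substantive technical obstacle.
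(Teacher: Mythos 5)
Your argument is correct and follows essentially the same route as the paper's proof of Corollary \ref{asym}: use (\ref{asyGamma}) to place the trace of the point source on $S$ in $L^{p}(S)$ for $p>2$, invoke Lemma \ref{lem2301} to get ${\bm u}^s,{\bm P}{\bm u}^s\in [L^{r}(T_H)]^2$ with $r<2$, note via (\ref{asyGamma}) and (\ref{asyPi}) that ${\bm u}^i,{\bm P}{\bm u}^i\in[L^{r'}(T_H)]^2$ for $r'>2$, and conclude by H\"older's inequality — exactly the paper's "choosing appropriate $r$ and $\vep_2$" step, which you make explicit with $p=3$, $r=4/3$, $r'=4$. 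The only inaccuracies are cosmetic: the claim that one \emph{must} take $p\in(2,4)$ is too strong (Lemma \ref{lem2301} admits any $r>2p/(p+2)$, and this lower bound is strictly below $2$ for every finite $p$, so any finite $p>2$ suffices), and your caveat about $y\in T_H$ is a legitimate observation about a point the paper's proof passes over silently.
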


\begin{proof}
We only prove the case $j=1$. The proof of the case $j=2$ is similar.
The asymptotic property (\ref{asyGamma}) implies that
${\bm u}^i(\cdot;y,\bm e_1)\in[L^{2+\vep_1}(S)]^2\;\text{for}\;\vep_1>0.$
Thus, by Lemma \ref{lem2301} we have that
${\bm u}^s(\cdot;y,\bm e_1),\;{\bm P\bm u}^s(\cdot;y,\bm e_1)\in [L^{r}(T_H)]^2\;\text{for}\;r>1.$
From (\ref{asyGamma}) and (\ref{asyPi}), we obtain that
\ben
\ov{{\bm u}^i(\cdot;y,\bm e_1)},\;\ov{{\bm P\bm u}^i(\cdot;y,\bm e_1)}\in [L^{2+\vep_2}(T_H)]^2\;\;
\text{for}\;\;\vep_2>0.
\enn
Thus, choosing appropriate $r$ and $\vep_2$ yields
\ben
{\bm u}^s(\cdot;y,\bm e_1)\cdot\ov{{\bm P\bm u}^i(\cdot;y,\bm e_1)},\;\;
{\bm P\bm u}^s(\cdot;y,\bm e_1)\cdot\ov{{\bm u}^i(\cdot;y,\bm e_1)}\in L^{1}(T_H).
\enn
The proof is thus complete.
\end{proof}

By (\ref{phi}) and the Funk-Hecke Formula (see \cite{CK}):
\be\label{FHformula}
J_0(k|x-y|)=\frac{1}{2\pi}\int_{\mathbb{S}^1}e^{ik(x-y)\cdot\bm d}ds(\bm d),
\en
we have
\be\label{111}
\text{Im}~\varPhi_k(x,y)=\frac{1}{4}J_0(k|x-y|)=\frac{1}{8\pi}\int_{\mathbb{S}^1}e^{ik(x-y)\cdot\bm d}ds(\bm d).
\en
Taking the imaginary part of (\ref{Gamma}) and using (\ref{111}) yield
\be\no
\I\,\G(x,y)&=&\frac{1}{8\pi}\left[\frac{1}{\la+2\mu}\int_{\Sp^1}{\bm d}
  \otimes{\bm d}e^{ik_p(x-y)\cdot\bm d}ds(\bm d)\right.\\ \label{IMGamma}
&&\qquad\qquad\qquad\qquad\left.+\frac{1}{\mu}\int_{\Sp^1}(\bm I-\bm d\otimes\bm d)e^{ik_s(x-y)
\cdot\bm d}ds(\bm d)\right],
\en
where ${\bm d}=(d_1,d_2)^T\in\Sp^1:=\{x=(x_1,x_2)\;|\;|x|=1\}$ and $\otimes$ is defined as in (\ref{otimes}).

Set
\ben
\I\,\G(x,y):=\I_+\Gamma({x,y})+\I_-\Gamma({x,y}),
\enn
where
\ben
\I_+\G(x,y)&=&\frac{1}{8\pi}\left[\frac{1}{\la+2\mu}\int_{\Sp^1_+}\bm d
       \otimes{\bm d}e^{ik_p(x-y)\cdot\bm d}ds(\bm d)\right.\\
&&\qquad\qquad\qquad\qquad\left.+\frac{1}{\mu}\int_{\Sp^1_+}(\bm I-\bm d\otimes\bm d)e^{ik_s(x-y)
       \cdot\bm d}ds(\bm d)\right],\\
\I_-\G(x,y)&=&\frac{1}{8\pi}\left[\frac{1}{\la+2\mu}\int_{\Sp^1_-}\bm d
   \otimes{\bm d}e^{ik_p(x-y)\cdot \bm d}ds(\bm d)\right.\\
&&\qquad\qquad\qquad\qquad\left.+\frac{1}{\mu}\int_{\Sp^1_-}(\bm I-\bm d\otimes\bm d)e^{ik_s(x-y)
  \cdot\bm d}ds(\bm d)\right].
\enn
Then we have the following identity which is similar to the Helmholtz-Kirchhoff identity for the elastic
scattering by bounded obstacles \cite{CH14} and the acoustic scattering by rough surfaces \cite{LZZ18}.

\begin{lemma}\label{HK}
For any $H\in\R$ we have
\be\label{hhhkkk}
\int_{T_H}\left([\Pi^{(1)}(\xi,x)]^T\ov{\G(\xi,y)}-[\G(\xi,x)]^T\ov{\Pi^{(1)}(\xi,y)}\right)ds(\xi)
=2i\I_+\G(y,x)
\en
for $x,y\in\R^2\ba\ov{U}_H$, where $\Pi^{(1)}$ is defined in (\ref{Pi1}) with the unit
normal $\bm n$ on $T_{H}$ pointing into $U_H$.
\end{lemma}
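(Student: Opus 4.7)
The plan is to apply Betti's second identity (the elastic Green's formula) for the Navier equation in the half-ball $D_R:=U_H\cap B_R(0)$ and then pass to the limit $R\to\infty$. For each fixed pair $(j,k)\in\{1,2\}^2$, set $\bm u(\xi):=\G_{\cdot j}(\xi,x)$ and $\bm v(\xi):=\ov{\G_{\cdot k}(\xi,y)}$. Because the Navier operator has real coefficients, $\bm v$ also solves (\ref{naviereq}) wherever $\G(\cdot,y)$ does; and since $x,y\in\R^2\setminus\ov U_H$, both $\bm u$ and $\bm v$ are smooth solutions of the Navier equation on the whole of $U_H$. The volume term in Betti's identity therefore vanishes, and after reassembling the four scalar identities into the matrix statement and turning the outward normal to $D_R$ on $T_H$ (pointing downward) into the paper's convention (pointing upward into $U_H$), one obtains
\begin{equation*}
\int_{T_H\cap B_R}\bigl([\Pi^{(1)}(\xi,x)]^T\ov{\G(\xi,y)}-[\G(\xi,x)]^T\ov{\Pi^{(1)}(\xi,y)}\bigr)ds(\xi)=\int_{C_R^+}\bigl([\widetilde\Pi^{(1)}(\xi,x)]^T\ov{\G(\xi,y)}-[\G(\xi,x)]^T\ov{\widetilde\Pi^{(1)}(\xi,y)}\bigr)ds(\xi),
\end{equation*}
where $C_R^+:=\pa B_R\cap U_H$ and $\widetilde\Pi^{(1)}$ denotes the generalised stress with respect to the radial outward normal.

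The heart of the proof is to evaluate the right-hand side as $R\to\infty$. I would split $\G=\G_p+\G_s$ and correspondingly $\widetilde\Pi^{(1)}=\widetilde\Pi^{(1)}_p+\widetilde\Pi^{(1)}_s$, producing four integrands indexed by $(\alpha,\beta)\in\{p,s\}^2$. Using (\ref{aaaa})--(\ref{cccc}), on $C_R^+$ every factor decomposes, to leading order, as a scalar Hankel kernel times the orthogonal projector $P_p(\hat{\bm d}):=\hat{\bm d}\otimes\hat{\bm d}$ or $P_s(\hat{\bm d}):=\bm I-\hat{\bm d}\otimes\hat{\bm d}$. Because $P_\alpha^2=P_\alpha$ and $P_pP_s=P_sP_p=0$, all cross terms ($\alpha\ne\beta$) cancel at leading order; the residual products of a leading $O(|\xi|^{-1/2})$ factor with an $o(|\xi|^{-1})$ remainder give an $o(|\xi|^{-3/2})$ contribution, which integrated against $|C_R^+|=\pi R$ vanishes as $R\to\infty$. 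For each same-type pair, substituting $\widetilde\Pi^{(1)}_\alpha\sim (i\omega^2/k_\alpha)\G_\alpha$ collapses the integrand into the scalar Helmholtz-Kirchhoff kernel
\begin{equation*}
\frac{2ik_\alpha^3}{\omega^2}\,\varPhi_{k_\alpha}(\xi,x)\,\ov{\varPhi_{k_\alpha}(\xi,y)}\,P_\alpha(\hat{\bm d}).
\end{equation*}
Inserting the far-field asymptotics $\varPhi_{k_\alpha}(\xi,x)\ov{\varPhi_{k_\alpha}(\xi,y)}\sim(8\pi k_\alpha R)^{-1}e^{ik_\alpha\hat{\bm d}\cdot(y-x)}$ from (\ref{bessel}), and parametrising $\xi=R\hat{\bm d}$ with $\hat{\bm d}\in\Sp^1_+$, $ds(\xi)=R\,d\theta$, the $\alpha$-contribution in the limit $R\to\infty$ becomes $(ik_\alpha^2/4\pi\omega^2)\int_{\Sp^1_+}e^{ik_\alpha\hat{\bm d}\cdot(y-x)}P_\alpha(\hat{\bm d})\,ds(\hat{\bm d})$; summing over $\alpha\in\{p,s\}$ and using $1/(\lambda+2\mu)=k_p^2/\omega^2$ and $1/\mu=k_s^2/\omega^2$, this is exactly $2i\I_+\G(y,x)$, which proves (\ref{hhhkkk}).

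The main obstacle will be the rigorous control of the $o(|\xi|^{-1})$ remainders in (\ref{aaaa})--(\ref{cccc}): these must be uniform in $\hat{\bm d}\in\Sp^1_+$ so that the limit can be passed under the integral sign, but this is immediate from the uniform Hankel asymptotics (\ref{bessel})--(\ref{bessel-}) on the compact set $\Sp^1_+$. A secondary subtlety is that the left-hand integral over $T_H$ is only conditionally convergent (individual entries of $\Pi^{(1)}$ and $\G$ have rows of size $|\xi_1|^{-1/2}$, so the integrand need not lie in $L^1(T_H)$); interpreting it as the limit of the integrals over $T_H\cap B_R$, which is precisely what the displayed identity provides, removes this difficulty.
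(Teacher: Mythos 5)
Your proposal follows essentially the same route as the paper's proof: the third generalised Betti formula on the region bounded by $T_{H,R}$ and the upper half-circle, then the far-field asymptotics (\ref{aaaa})--(\ref{cccc}) on the half-circle, with the $p$--$s$ cross terms vanishing by orthogonality of the projectors $\hat{\bm d}\otimes\hat{\bm d}$ and $\bm I-\hat{\bm d}\otimes\hat{\bm d}$ and the same-type terms collapsing to $2i\I_+\G(y,x)$. The only point worth flagging is that your constant $2ik_\alpha^3/\om^2$ presupposes $\G_\alpha\sim(k_\alpha^2/\om^2)\varPhi_{k_\alpha}(\cdot)$, which silently corrects an apparent typo ($\om$ versus $\om^2$) in (\ref{aaaa})--(\ref{bbbb}) as printed; your version is the one consistent with the stated right-hand side, and your remark on the conditional convergence of the $T_H$ integral addresses a subtlety the paper leaves implicit.
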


\begin{proof}
Let $\pa B^+_R$ be the upper half-circle above $T_H$ centered at $(0,H)$ with radius $R>0$ and define
$T_{H,R}:=\{x\in T_H~|~|x_1|\le R\}$. Denote by $B$ the bounded region enclosed by $T_{H,R}$ and $\pa B^+_R$.
For any vectors $\bm p,\bm q\in\R^2$, $\G({\cdot,x})\bm q$ and $\ov{\G({\cdot,y})}\bm p$ satisfy the Navier
equation in $B$, so, by the third generalised Betti formula (\cite[Lemma 2.4]{A00}) we have
\begin{align*}
0&=\int_{B}\left(\Delta^*\G({\xi,x})\bm q+\om^2\G({\xi,x})\bm q\right)\cdot\ov{\G({\xi,y})}\bm p~d\xi\\
&=\int_B\left(\Delta^*\G(\xi,x)\bm q\cdot\ov{\G(\xi,y)}\bm p
   -\Delta^*\ov{\G(\xi,y)}\bm p\cdot\G(\xi,x)\bm q\right)d\xi\\
&=\int_{T_{H,R}}\left(-\ov{\G(\xi,y)}\bm p\cdot\bm P[\G(\xi,x)\bm q]
  +\G(\xi,x)\bm q\cdot\bm P[\ov{\G(\xi,y)}\bm p]\right)ds(\xi)\\
&\quad+\int_{\pa B^+_R}\left(\ov{\G({\xi,y})}\bm p\cdot\bm P[\G({\xi,x})\bm q]
  -\G({\xi,x})\bm q\cdot\bm P[\ov{\G({\xi,y})}\bm p]\right)ds(\xi),
\end{align*}
that is,
\be\no
&&\int_{T_{H,R}}\left(\Pi^{(1)}({\xi,x})\bm q\cdot\ov{\G({\xi,y})}\bm p
  -\G({\xi,x})\bm q\cdot\ov{\Pi^{(1)}({\xi,y})}\bm p\right)ds(\xi)\\ \label{eq222}
&&\qquad\qquad=\int_{\pa B^+_R}\left(\Pi^{(1)}({\xi,x})\bm q\cdot\ov{\G({\xi,y})}\bm p
-\G({\xi,x})\bm q\cdot\ov{\Pi^{(1)}({\xi,y})}\bm p\right)ds(\xi).
\en
Using the asymptotic properties (\ref{aaaa})-(\ref{cccc}), we obtain that
\ben
&&\lim_{R\rightarrow\infty}\int_{\pa B^+_R}\left(\Pi^{(1)}({\xi,x})\bm q\cdot\ov{\G({\xi,y})}\bm p
   -\G({\xi,x})\bm q\cdot\ov{\Pi^{(1)}({\xi,y})}\bm p\right)ds(\xi)\\
&&\;=\lim_{R\rightarrow\infty}\int_{\pa B^+_R}i\om\left(c_s\G_s({\xi,x})
  +c_p\G_p({\xi,x})\right)\bm q\cdot\left(\ol{\G_s({\xi,y})}+\ov{\G_p({\xi,y})}\right){\bm p}ds(\xi)\\
&&\;\;+\lim_{R\rightarrow\infty}\int_{\pa B^+_R}\left(\G_s({\xi,x})
  +\G_p({\xi,x})\right)\bm q\cdot i\om\left(c_s\ov{\G_s({\xi,y})}+c_p\ov{\G_p({\xi,y})}\right){\bm p}ds(\xi)\\
&&\;=2i\om\bm q\cdot\lim_{R\rightarrow\infty}\int_{\pa B^+_R}\left(c_s\G^T_s({\xi,x})\ov{\G_s({\xi,y})}
  +c_p\G^T_p({\xi,x})\ov{\G_p({\xi,y})}\right)ds(\xi){\bm p} \\
&&\;=\frac{i}{4\pi}\bm q\cdot\int_{\Sp^1_+}\left[\frac{1}{\mu}\left(\bm I-\hat{\xi}\hat{\xi}^T\right)
  e^{-ik_s\hat{\xi}\cdot(x-y)}+\frac{1}{\la+2\mu}\hat{\xi}\hat{\xi}^T
  e^{-ik_p\hat{\xi}\cdot(x-y)}\right]ds(\xi){\bm p}\\
&&\;=2i\bm q\cdot\I_+~\G({y,x})\bm p.
\enn
Then (\ref{hhhkkk}) follows from (\ref{eq222}). The proof is thus complete.
\end{proof}

We need the following reciprocity relation result.

\begin{lemma}\label{RR} (Reciprocity relation)
For ${\bm p}, {\bm q}\in \R^2$, let ${\bm u}^s(z;x,\bm p)$ and ${\bm u}^s(z;y,\bm q)$ be the scattered fields in $\Om$ associated with
the rough surface $S$ and the incident point sources ${\bm u}^i(z;x,\bm p):=\G(z;x)\bm p$ and
${\bm u}^i(z;y,\bm q):=\G(z;y)\bm q$, respectively. Then
\be\label{leRR}
{\bm u}^s(x;y,\bm p)\cdot\bm q={\bm u}^s(y;x,\bm q)\cdot\bm p,\;\;x,y\in\Om.
\en
\end{lemma}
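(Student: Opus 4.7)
The plan is to apply the second generalised Betti formula of elastostatics to the two \emph{total} fields
\[
\bm u_1(z) := \Gamma(z,x)\bm p + \bm u^s(z;x,\bm p), \qquad \bm u_2(z) := \Gamma(z,y)\bm q + \bm u^s(z;y,\bm q),
\]
both of which solve the Navier equation in $\Omega\setminus\{x,y\}$ and vanish on $S$ in view of Remark~\ref{rm1}. The natural test domain is
\[
D_{R,\epsilon}^H := \{z\in\Omega:\;z_2<H,\;|z_1|<R\}\setminus\bigl(\ov{B_\epsilon(x)}\cup\ov{B_\epsilon(y)}\bigr),
\]
with $H>\max(f_+,x_2,y_2)$, $R$ large and $\epsilon$ small. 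Betti's identity then gives
\[
\int_{\partial D_{R,\epsilon}^H}\bigl\{\bm u_2\cdot\bm P\bm u_1-\bm u_1\cdot\bm P\bm u_2\bigr\}\,ds=0,
\]
and the proof reduces to processing the four boundary pieces in the limits $\epsilon\to 0$ and $R\to\infty$.

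Two of the four pieces are handled in closed form. On the surface piece $S\cap\{|z_1|<R\}$ the integrand vanishes pointwise by the Dirichlet condition. On $\partial B_\epsilon(x)$ I would split $\bm u_1$ into its incident and scattered parts; the pairing of $\bm u^s(\cdot;x,\bm p)$ with $\bm u_2$ contributes zero by Betti in the ball (both are smooth Navier solutions in $B_\epsilon(x)$), while the singular piece $\Gamma(\cdot,x)\bm p$ combined with $\bm u_2$, the standard Navier representation formula, and the symmetries $\Gamma(x,y)^T=\Gamma(y,x)$ and $[\Pi^{(1)}(z,x)]^T=\Pi^{(2)}(x,z)$, extracts the point value exactly. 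With the outward normal of $D_{R,\epsilon}^H$ (which points towards $x$ on $\partial B_\epsilon(x)$), the two small-circle integrals together contribute $-\bm u_2(x)\cdot\bm p + \bm u_1(y)\cdot\bm q$, independently of $\epsilon$.

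The main obstacle is the behaviour at $|z_1|\to\infty$. On the vertical walls $V_R^{\pm}:=\{\pm R\}\times(f(\pm R),H)$ the decay estimates (\ref{asyGamma})--(\ref{asyPi}) for $\Gamma,\Pi^{(1)}$, together with the integral representation (\ref{ux}) for $\bm u^s$ and the half-space estimates (\ref{asyGammadh})--(\ref{asyGammadh1}), give a uniform $R^{-2}$ decay of the integrand, so $\int_{V_R^\pm}\to 0$ as $R\to\infty$. The cap $T_{H,R}$ is more delicate: neither factor of the integrand is individually small, and pointwise cancellations must be exploited. This is precisely the purpose of the two auxiliary lemmas announced for the Appendix, whose conclusion is
\[
\lim_{R\to\infty}\int_{T_{H,R}}\bigl\{\bm u_2\cdot\bm P\bm u_1 - \bm u_1\cdot\bm P\bm u_2\bigr\}\,ds=0,
\]
expressing, morally, the Betti reciprocity of two upward-radiating elastic fields in $U_H$, encoded by the UPRC (\ref{uprc}) and the symmetries of $\Gamma_{D,H}$.

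Combining the four contributions yields $\bm u_1(y)\cdot\bm q = \bm u_2(x)\cdot\bm p$, i.e.\
\[
\Gamma(y,x)\bm p\cdot\bm q + \bm u^s(y;x,\bm p)\cdot\bm q = \Gamma(x,y)\bm q\cdot\bm p + \bm u^s(x;y,\bm q)\cdot\bm p.
\]
Since $\Gamma(y,x)^T=\Gamma(x,y)$ forces $\Gamma(y,x)\bm p\cdot\bm q = \Gamma(x,y)\bm q\cdot\bm p$, the incident terms cancel, and a final relabelling $\bm p\leftrightarrow\bm q$ delivers (\ref{leRR}).
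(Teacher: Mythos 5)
Your proposal is correct and follows essentially the same route as the paper: Betti's identity for the two total fields on a truncated domain with the two source balls excised, the Dirichlet condition killing the surface piece, the decay estimates killing the vertical walls, the appendix lemma (the paper's Lemma~\ref{lemmaA}, proved via the angular spectrum representation) killing the top cap, the singular extraction at the sources (the paper's Lemma~\ref{lemmaB}), and finally cancellation of the incident terms via the symmetry of $\Gamma$. The only cosmetic differences are that the paper invokes the \emph{third} generalised Betti formula of \cite[Lemma 2.4]{A00} rather than the ``second,'' and it does not need your quantitative $R^{-2}$ rate on the walls.
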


\begin{proof}
For $b,L,\vep>0$ define
$$
D_{b,L,\vep}:=\{z\in\Omega\;|\;|z_2|< b,|z_1|< L,|z-x|>\vep,|z-y|>\vep\}.
$$
Choose $\vep$ sufficiently small and $b,L$ large enough such that $\ov{B_{\vep}(x)}\subset D_{b,L,\vep}$,
$\ov{B_{\vep}(y)}\subset D_{b,L,\vep}$ and $\ov{B_{\vep}(x)}\cap\ov{B_{\vep}(y)}=\emptyset$.
Then, by Theorem \ref{thmwp}, ${\bm u}\in [C^2(\Omega)\cap C(\ov{\Omega})\cap H^1_{loc}(\Omega)]^2$.
Thus, use the third generalised Betti formula (see \cite[Lemma 2.4]{A00}) in $D_{b,L,\vep}$ to get
\be\label{eq330}
0=\int_{\pa D_{b,L,\vep}}\left[({\bm P\bm u})(z;x,\bm p)\cdot{\bm u}(z;y,\bm q)
-({\bm P\bm u})(z;y,\bm q)\cdot{\bm u}(z;x,\bm p)\right] ds(z),
\en
where $\pa D_{b,L,\vep}:= T_{b,L}\cup T^\pm_{L}\cup\pa B_{\vep}({x})\cup\pa B_{\vep}({y})\cup S_L$ with
$T^\pm_{L}:=\{z\in\Omega~|~z_1=\pm L,~|z_2|< b\}$ and $S_{L}:=\{z\in S~|~|z_1|<L\}$.
From Lemma \ref{lemmaA} it follows that
\begin{equation}\label{eq332}
\lim_{L\to\infty}\int_{T_{b,L}}\left[({\bm P\bm u})(z;{x},\bm p)\cdot{\bm u}(z;{y},\bm q)
-({\bm P\bm u})(z;y,\bm q)\cdot{\bm u}(z;x,\bm p)\right]ds(z)=0.
\end{equation}
Using (\ref{asyGamma}) gives
\be\label{eq333}
\lim_{L\to\infty}\int_{T^\pm_{L}}\left[({\bm P\bm u})(z;x,\bm p)\cdot{\bm u}(z;y,\bm q)
-({\bm P\bm u})(z;y,\bm q)\cdot{\bm u}(z;x,\bm p)\right] ds(z)=0.\;
\en
Now, applying the third generalised Betti formula to ${\bm u}^s(z;y,\bm q)$ and ${\bm u}(z;x,\bm p)$
in $B_{\vep}(y)$, we have
\be\label{eq335}
\int_{\pa B_{\vep}(y)}\left[({\bm P\bm u})(z;x,\bm p)\cdot {\bm u}^s(z;y,\bm q)
-({\bm P\bm u}^s)(z;y,\bm q)\cdot {\bm u}(z;x,\bm p)\right] ds(z)=0,\;
\en
while applying the third generalised Betti formula to ${\bm u}^s(z;x,p)$ and ${\bm u}(z;y,\bm q)$
in $B_{\vep}(x)$ gives
\be\label{eq336}
\int_{\pa B_{\vep}(x)}\left[({\bm P\bm u}^s)(z;x,\bm p)\cdot {\bm u}(z;y,\bm q)
-({\bm P\bm u})(z;y,\bm q)\cdot {\bm u}^s(z;x,\bm p)\right] ds(z)=0.
\en
Letting $L\to\infty$ in (\ref{eq330}) and using (\ref{eq332})-(\ref{eq336}) and the Dirichlet boundary
condition $\bm u(z;x,\bm p)=\bm u(z;y,\bm q)=0$ on $S$ yield
\ben
0&=&\int_{\pa B_{\vep}(x)}\left[({\bm P\bm u})(z;y,\bm q)\cdot {\bm u}^i(z;x,\bm p)
-{\bm u}(z;y,\bm q)\cdot ({\bm P\bm u}^i)(z;x,\bm p)\right]ds(z)\\
&& -\int_{\pa B_{\vep}(y)}\left[({\bm P\bm u})(z;x,\bm q)\cdot {\bm u}^i(z;y,\bm p)
-{\bm u}(z;x,\bm q)\cdot ({\bm P\bm u}^i)(z;y,\bm p)\right] ds(z).
\enn
This, together with Lemma \ref{lemmaB}, implies
\ben
{\bm u}({x;y},\bm q)\cdot\bm p={\bm u}({y;x},\bm p)\cdot\bm q.
\enn
The required equation (\ref{leRR}) then follows from the above equation and the fact that
${\bm u}^i({x;y},\bm q)\cdot\bm p={\bm u}^i({y;x},\bm p)\cdot\bm q$.
The proof is thus complete.
\end{proof}

We are now ready to prove the following theorem which leads to the imaging function for our imaging algorithm.

\begin{theorem}\label{maintheorem}
Let ${\bm u}^{s}(x;y,\bm e_j)$ be the unique scattered field generated by the incident point source
${\bm u}^{i}(x;y,\bm e_j):=\G(x;y)\bm e_j$ located at $y\in\Om$, $j=1,2$.
Define
\ben
{\bm U}^{i}(y;z,\bm e_j):=2i\bm e_j^T\I~\G(y,z).
\enn
Then the scattered field generated by ${\bm U}^{i}({y;z},\bm e_j)$ is given by
\ben
{\bm U}^{s}(y;z,\bm e_j)&:=&\int_{T_H}\left({\bm P\bm u}^{s}(x;y,\bm e_j)\cdot\ov{{\bm u}^{i}(x;z,\bm e_j)}
-{\bm u}^{s}(x;y,\bm e_j)\cdot\ov{{\bm P\bm u}^{i}(x;z,\bm e_j)}\right)ds(x)\\
&&-2i\bm e_j\cdot\I_-~\G(z,y)\bm e_j,\;\;j=1,2.
\enn
\end{theorem}

\begin{proof}
Note first that, by Corollary \ref{asym} ${\bm U}^{s}({y;z},e_j)$ is well-defined for $j=1,2$.
Now define the integral operators ${\bm S}$ and ${\bm D}$ by
\ben
{\bm S}\bm\varphi({x})&:=& 2\int_{S}\G_{D,h}({x,y})\bm\varphi({y})ds({y}), \quad {x}\in S, \\
{\bm D}\bm\varphi({x})&:=& 2\int_{S}\Pi^{(2)}_{D,h}({x,y})\bm\varphi({y})ds({y}), \quad {x}\in S.
\enn
Since ${\bm u}^s({y;x},\bm e_j)$ is the solution to Problem \ref{pb1} with the boundary data
${\bm g}_{x,\bm e_j}(y)=-\Gamma({y,x})\bm e_j$, $y\in S$,
then, by Theorem \ref{thmwp} there exists $\bm\varphi_{x,\bm e_j}(y)\in[BC(S)]^2$ such that
\ben
{\bm u}^{s}({y,x};\bm e_j)=\int_S\left[\Pi^{(2)}_{D,h}({y,\xi})-i\eta\G_{D,h}({y,\xi})\right]
\bm\varphi_{x,\bm e_j}({\xi})ds({\xi}), \quad {y}\in\Omega,
\enn
where $\bm\varphi_{{x},\bm e_j}$ satisfied the integral equation
$({\bm I}+{\bm D}-i\eta{\bm S})\bm\varphi_{x,\bm e_j}=-2{\bm g}_{x,\bm e_j}$,
and the integral operator ${\bm I}+{\bm D}-i\eta{\bm S}$ is bijective (and so boundedly invertible) in $[BC(S)]^2$.
Here, we use the subscript ${\bm x}$ to indicate the dependance on the point ${\bm x}$.
Further, since, for $y\in S$ the boundary data ${\bm g}_{x,\bm e_j}(y)$ is differentiable with respect
to $x\in T_H$, then we have $\bm\varphi_{x,\bm e_j}$ is differentiable with respect to $x$ and
\be\label{biemm}
{\bm P^{(x)}}\bm\varphi_{{x},\bm e_j}=-2({\bm I}+{\bm D}- i\eta{\bm S})^{-1}{\bm P^{(x)}}{\bm g}_{x,\bm e_j}.
\en
Define $\tilde{\bm\varphi}_{{x},\bm e_j}:={\bm P^{(x)}}\bm\varphi_{{x},\bm e_j}$ and
$\tilde{{\bm g}}_{x,\bm e_j}:={\bm P^{(x)}}{\bm g}_{x,\bm e_j}$. Then (\ref{biemm}) can be rewritten as
$({\bm I}+{\bm D}- i\eta{\bm S})\tilde{\bm\varphi}_{{x},\bm e_j}=-2\tilde{\bm g}_{x,\bm e_j}$. Thus,
\be\label{1us}
\tilde{\bm u}^{s}(y;x,\bm e_j):=\int_S\left\{\Pi^{(2)}_{D,h}(y,\xi)-i\eta\G_{D,h}({y,\xi})\right\}
\tilde{\bm\varphi}_{x,\bm e_j}({\xi})ds({\xi}),\quad y\in\Om
\en
is a solution to Problem \ref{pb1} with the boundary data $\tilde{\bm g}_{x,\bm e_j}$.
Since the layer-potentials operators $\bm V$ and $\bm W$ defined by (\ref{V}) and (\ref{W}), respectively,
are bounded linear operators from $[H^{1/2}_{loc}(S)]^2$ to $[H^{1}_{loc}(U_h\ba S)]^2$ (see \cite{A00}),
we have
\be\label{eq008}
\tilde{\bm u}^{s}({y;x},\bm e_j)={\bm P}^{(x)}{\bm u}^{s}({y;x},\bm e_j),\;\;j=1,2.
\en
By (\ref{biemm}), (\ref{1us}), (\ref{eq008}) and the reciprocity relation in Lemma \ref{RR}, we obtain that
${\bm P}^{(x)}{\bm u}^{s}({x;y},\bm e_j)$ is the solution to Problem \ref{pb1}
with the boundary data ${\bm P^{(x)}}{\bm g}_{x,\bm e_j}(y)$, $j=1,2$.

Now, for $j=1,2$ define
\ben
\tilde{\tilde{\bm\varphi}}_{z,\bm e_j}(y)
&:=&\int_{T_H}\left(\tilde{\bm\varphi}_{x,\bm e_j}(y)\ov{\G({x,z})}\bm e_j
-\bm\varphi_{x,\bm e_j}(y)\ov{\Pi^{(1)}({x,z})}\bm e_j\right)ds(x), \\
\tilde{\tilde{{\bm g}}}_{z,\bm e_j}(y)
&:=&\int_{T_H}\left(\tilde{\bm g}_{x,\bm e_j}(y)\ov{\G({x,z})}\bm e_j
-{\bm g}_{x,\bm e_j}(y)\ov{\Pi^{(1)}({x,z})}\bm e_j\right)ds({x}).
\enn
By (\ref{Gammajks}), (\ref{asyGamma}) and (\ref{asyPi}), and since
\be\label{eq011}
({\bm I}+{\bm D}- i\eta{\bm S})\tilde{\tilde{\bm\varphi}}_{{z},\bm e_j}({y})
=-2\tilde{\tilde{{\bm g}}}_{z,\bm e_j}(y),\quad y\in S,
\en
then it follows that
\be\label{eq012}
\tilde{\tilde{{\bm u}}}^s({y;z},\bm e_j):=\int_S\left[\Pi^{(2)}_{D,h}({y,\xi})
-i\eta\G_{D,h}({y,\xi})\right]\tilde{\tilde{\bm\varphi}}_{z,\bm e_j}({\xi})ds({\xi}),
\quad{y}\in\Omega
\en
is the solution to Problem \ref{pb1} with the boundary data $\tilde{\tilde{\bm g}}_{z,\bm e_j}(y)$, $j=1,2$.

From the above analysis it is seen that ${\bm U}^{s}({y;z},\bm e_j)$ is the solution to Problem \ref{pb1}
with the boundary data
\ben
{\bm G}({y;z},q)&=&-\int_{T_H}\left(\Pi^{(1)}({x;y},\bm e_j)\ov{\G({x,z})}\bm e_j
-\Gamma({x,y})\bm e_j\ov{\Pi^{(1)}({x,z})}\bm e_j\right)ds({x})\\
&&\;\; -2i\bm e_j^T\I_-\G({z,y}),\quad {y}\in S.
\enn
By Lemma \ref{HK} and the {Funk-Hecke} formula (\ref{FHformula}) we deduce that
\ben
{\bm G}({y;z},\bm e_j)=-2i\bm e_j^T\I_+\G({z,y})-2i\bm e_j^T\I_-\G({z,y})=-2i\bm e_j^T\I\,\G({z,y}),\;\;j=1,2.
\enn
The proof is thus complete.
\end{proof}

By Theorems \ref{thmwp} and \ref{maintheorem}, the scattered field ${\bm U}^s({y;z},\bm e_j)$ can be
expressed as
\be\label{US}
{\bm U}^{s}({y;z},\bm e_j)=\int_S\left\{\Pi^{(2)}_{D,h}({y,\xi})-i\eta\G_{D,h}({y,\xi})\right\}
\bm\varphi_{{z},\bm e_j}({\xi})ds({\xi}),\quad {y}\in\Om,
\en
where $\bm\varphi_{z,\bm e_j}$ is the unique solution to the integral equation
\ben
({\bm I}+{\bm D}-i\eta{\bm S})\bm\varphi_{z,\bm e_j}(y)=-4i\bm e_j^T\I\,\G({y,z}),
\quad{y}\in S,\;\;j=1,2.
\enn
%Here, we use the subscript ${\bm z}$ to indicate the dependence on the point ${\bm z}$.
Since ${\bm I}+{\bm D}- i\eta{\bm S}$ is bijective (and so boundedly invertible) in $[BC(S)]^2$, we have
\be\label{eq339}
C_1\|\bm e_j^T\I\,\Gamma({\cdot,z})\|_{\infty,S}\le\|\bm e_j\varphi_{z,\bm e_j}\|_{\infty,S}
\le C_2 \|\bm e_j^T\I\,\G({\cdot,z})\|_{\infty,S}
\en
for some positive constants $C_1,C_2$. Since the Bessel functions have the asymptotic
behavior \cite[Section 2.4]{CK}:
\ben
J_n(t)&=&\sum_{p=0}^\infty\frac{(-1)^p}{p!(n+p)!}\left(\frac{t}{2}\right)^{n+2p},\quad t\in\R,\\
J_n(t)&=&\sqrt{\frac{2}{\pi t}}\cos\left(t-\frac{n\pi}{2}
-\frac{\pi}{4}\right)\left\{1+O\left(\frac{1}{t}\right)\right\},\quad t\rightarrow\infty
\enn
for $n=0,1,...$, and noting that $J'_0(t)=-J_1(t)$, we know that
%\begin{figure}[htbp]
%  \centering
%  \subfigure{
%    \includegraphics[width=2in]{}}
%    \subfigure{
%    \includegraphics[width=2in]{}}
%\caption{The behavior of the bessel function $J_0$ (left) and $J_1$ (right).
%}\label{fig10}
%\end{figure}
\ben
\max\limits_{j,k=1,2}|\I\,\G_{jk}({y,z})|=\left\{
\begin{array}{ll}
O(1) &\textrm{if}\;y=z,\\
O\left({|y-z|^{-1/2}}\right)&\textrm{if}\;|{y-z}|>>1.
\end{array} \right.
\enn
Therefore, it follows from (\ref{eq339}) that for $j=1,2$,
\ben
\left\{\begin{array}{ll}
\|\varphi_{z,\bm e_j}\|_{\infty,S}\geq C_1 &\textrm{if}\;z\in S{\color{red},}\\
\|\varphi_{z,\bm e_j}\|_{\infty,S}=O\left({d(z,\Gamma)^{-1/2}}\right)&\textrm{if}\;d(z,\G)>>1.
\end{array} \right.
\enn
From this and (\ref{US}), it is expected that the scattered field ${\bm U}^s({y;z},\bm e_j)$
takes a large value when $z\in S$ and decays as $z$ moves away from $S$.
Thus, if we choose the imaging function
\be\no
I(z)&:=&{\sum^2_{j=1}}\int_{T_H}\left|\int_{T_H}\left({\bm P\bm u}^{s}(x;y,\bm e_j)
\cdot\ov{{\bm u}^{i}(x;z,\bm e_j)}
- {\bm u}^{s}(x;y,\bm e_j)\cdot\ov{{\bm P\bm u}^{i}(x;z,\bm e_j)}\right)ds(x)\right.\\ \label{indicator}
&&\quad\left.-2i\bm e_j^T\I_-\G(z,y)\bm e_j\right|^2ds(y),
\en
then it is expected that $I(z)$ takes a large value when $z\in S$ and decays as $z$ moves away from
the rough surface $S$. Based on (\ref{indicator}), we can develop a direct imaging algorithm to
locate the unbounded rough surface $S$ by only computing the inner products of the measured data
($\bm u^s,\bm P\bm u^s$) with the fundamental solution in the homogeneous background at each sampling point.

In numerical computations, the infinite integration interval $T_H$ in (\ref{indicator}) is truncated to be
a finite one ${T_{H,A}}:=\{{x}\in T_H~|~|x_1|<A\}$ which is then discretized uniformly into $2N$
subintervals with the step size $h=A/N$. In addition, the lower half-circle $\Sp^1_-$ in the integral
of $\I_-\G(z,y)$ in (\ref{indicator}) is also uniformly discretized into $M$ grids with the step
size $\Delta\theta=\pi/M$. Then for each sampling point $z$ we get the following discrete form of
the imaging function (\ref{indicator})
\be\label{eq188}
I_A(z)=\sum^2_{j=1}h\sum_{k=0}^{2N}\left|I_{1,A,j,k}(z)-I_{2,A,j,k}(z)\right|^2
\en
with
\ben
I_{1,A,j,k}(z)&=&h\sum_{i=0}^{2N}\left({\bm P\bm u}^{s}(x_i;y_k,\bm e_j)\cdot\ov{{\bm u}^{i}(x_i;z,\bm e_j)}
-{\bm u}^{s}(x_i;y_k,\bm e_j)\cdot\ov{{\bm P\bm u}^{i}(x_i;z,\bm e_j)}\right),\\
I_{2,A,j,k}(z)&=&\frac{1}{8\pi}\left[\frac{\Delta\theta}{\lambda+2\mu}
\sum_{k=0}^{2M}{\bm d}_k\otimes{\bm d}_k e^{ik_p(z-y_j)\cdot\bm d_k}+\frac{\Delta\theta}{\mu}
\sum_{k=0}^{2M}(\bm I-\bm d_k\otimes\bm d_k)e^{ik_s(z-y_j)\cdot\bm d_k}\right].
\enn
Here, the measurement points are denoted by $x_i=(-A+i\cdot h,H),~i=0,1,...,2N$,
the incident point source positions are $y_k=(-A+k\cdot h,H),~k=0,1,...,2N$ and
the directions $\bm d_k=(\sin(-\pi+k\Delta\theta),\cos(-\pi+k\Delta\theta)),~k=0,1,...,2M$.

The direct imaging method based on (\ref{eq188}) can be presented in the following algorithm.

\begin{algorithm}\label{alg}
Let $K$ be the sampling region which contains the part of the rough surface that we hope to recover.
\begin{description}
\item 1) Choose $\mathcal{T}_m$ to be a mesh of $K$ and $T_{H,A}$ to be a straight line segment
         above the rough surface.
\item 2) Collect the Cauchy data $({\bm u}^s(x_i;y_k,\bm e_j), {\bm P\bm u}^s(x_i;y_k,\bm e_j)),~i,k =0,1,...,2N$ on $T_{H,A}$ corresponding to the incident point source located at ${y}_k$.
\item 3) For all sampling points $z\in\mathcal{T}_m$, compute the discrete imaging function $I_A(z)$
         in (\ref{eq188}).
\item 4) Locate all those sampling points $z\in\mathcal{T}_m$ such that $I_A(z)$ takes a large value,
which represent the part of the rough surface on the sampling region $K$.
\end{description}
\end{algorithm}

\section{Numerical Experiments}\label{sec4}
\setcounter{equation}{0}

We now present several numerical experiments to demonstrate the effectiveness of our imaging algorithm
and compare the reconstructed results under different parameters.
To generate the synthetic data, we use the Nystr\"{o}m method to solve the forward scattering
problem based on the integral equation technique \cite{LSZZ18}.
The noisy Cauchy data are generated as follows
\ben
\bm u^s_{\delta}(x)&=&\bm u^s(x)+\delta(\zeta_1+i\zeta_2)\max_x|\bm u^s(x)|,\\
{\bm P u}^s_{\delta}(x)&=&\pa_\nu\bm u^s(x)+\delta(\zeta_1+i\zeta_2)\max_x|{\bm P\bm u}^s(x)|,
\enn
where $\delta$ is the noise ratio and $\zeta_1,\zeta_2$ are the standard normal distributions.
In all examples, we choose $N=100$ and $M=256$.
The sampling region will be set to be a rectangular domain and
the Lam\'e constants are taken as $\mu = 1, \lambda = 1$.
In each figure, we use a solid line to represent the actual rough surface.

{\bf Example 1.} In this example we compare the reconstructed results with different wave numbers.
The exact rough surface $S$ is given by
\ben
f(x_1)= 0.5+0.03\sin[2.5\pi(x_1-1)]+0.12\sin[0.4\pi(x_1-1)]
\enn
The scattered near-field Cauchy data is measured on $\G_{H,A}$ with $H=2,~A=20$ and polluted with 20\% noise.
The reconstructed results show that the macro-scale features of the rough surface are captured
with smaller wave numbers (see Fig. \ref{fig1} (a)) and the whole rough surface is accurately recovered
with larger wave numbers (see Fig. \ref{fig1} (c)).

\begin{figure}[htbp]
  \centering
  \subfigure[\textbf{$k_p=5.20,\;k_s=9$}]{\includegraphics[width=1.65in]{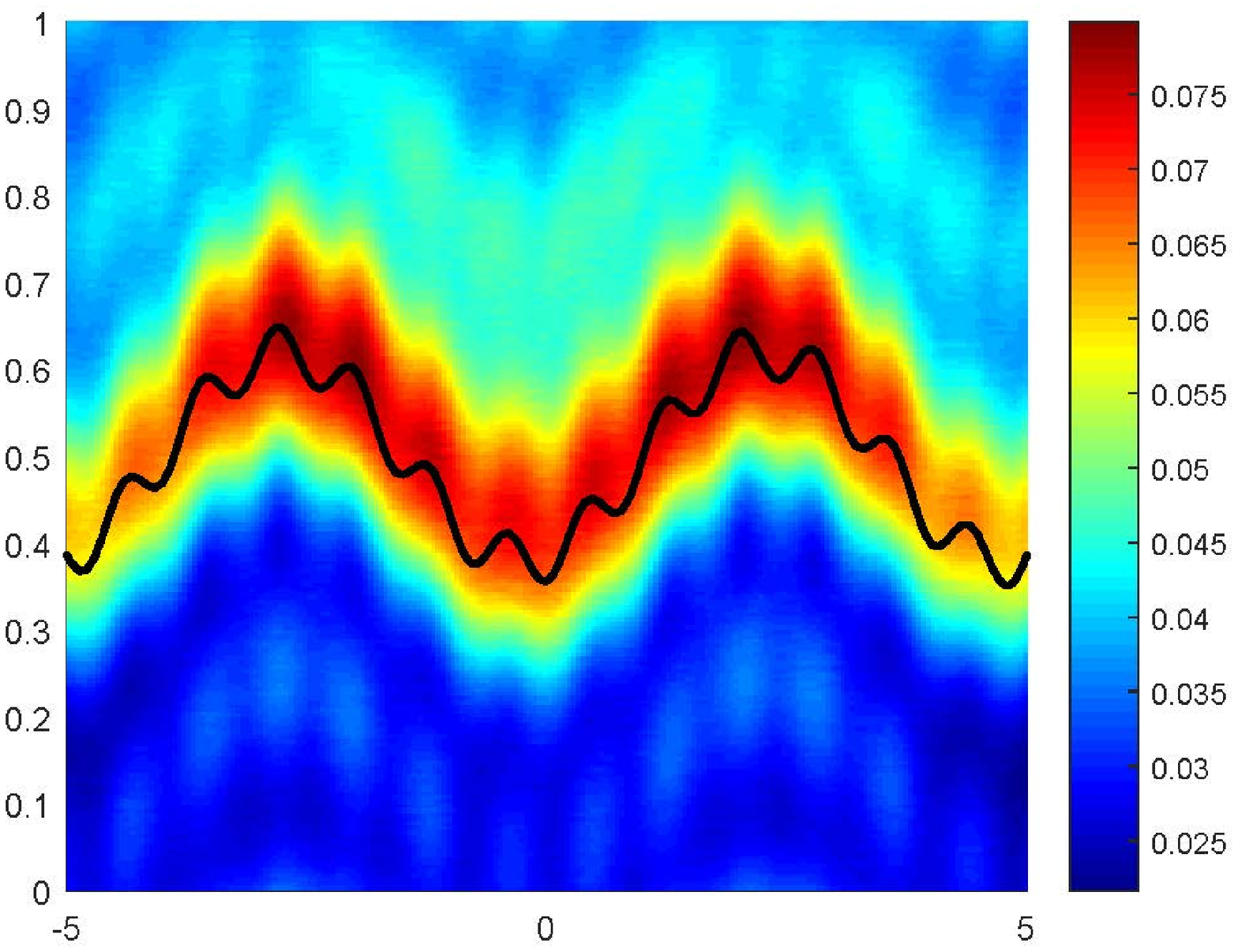}}
%  \hspace{-0.05in}
  \subfigure[\textbf{$k_p=8.66,\;k_s=15$}]{\includegraphics[width=1.65in]{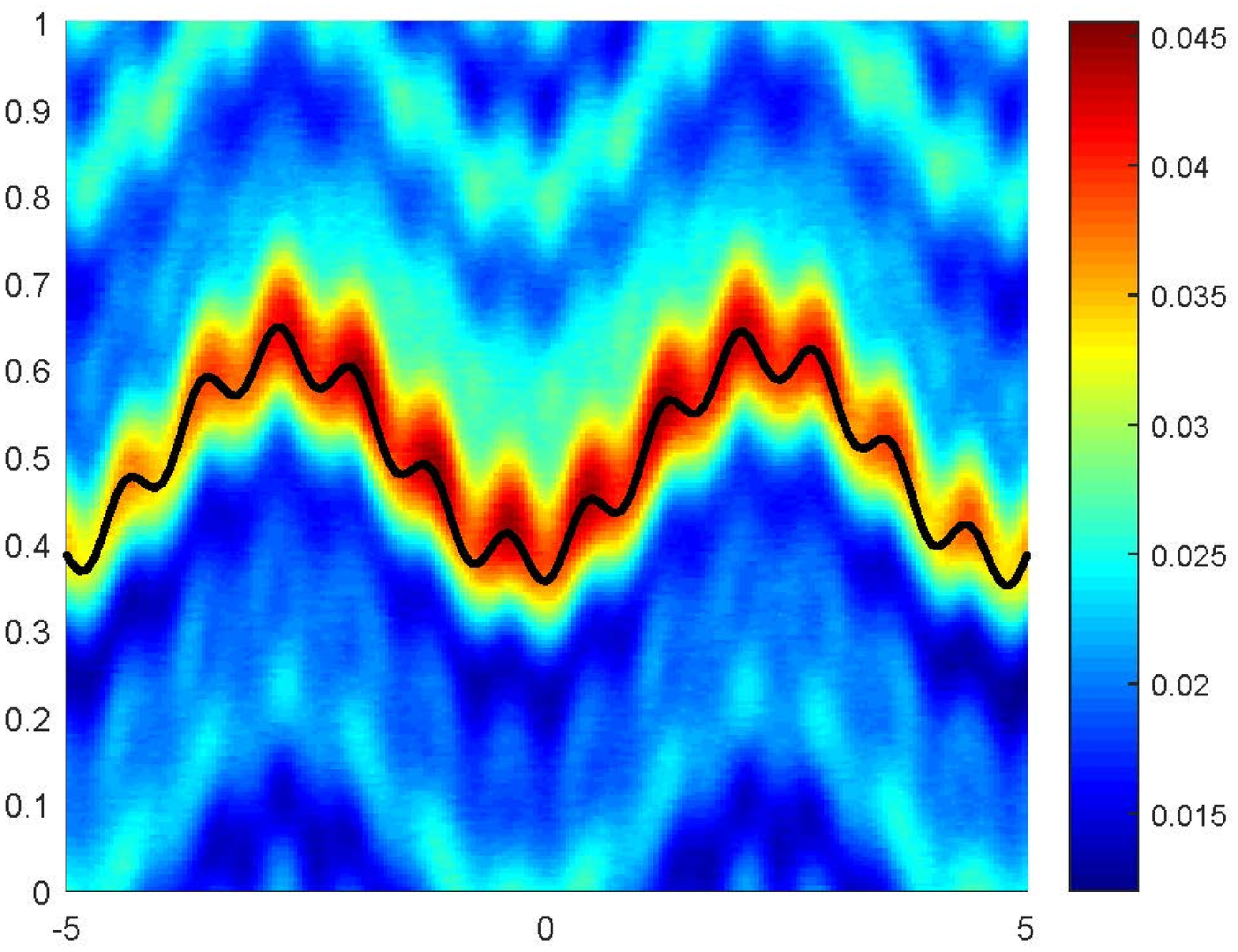}}
%  \hspace{-0.05in}
  \subfigure[\textbf{$k_p=11.55,\;k_s=20$}]{\includegraphics[width=1.65in]{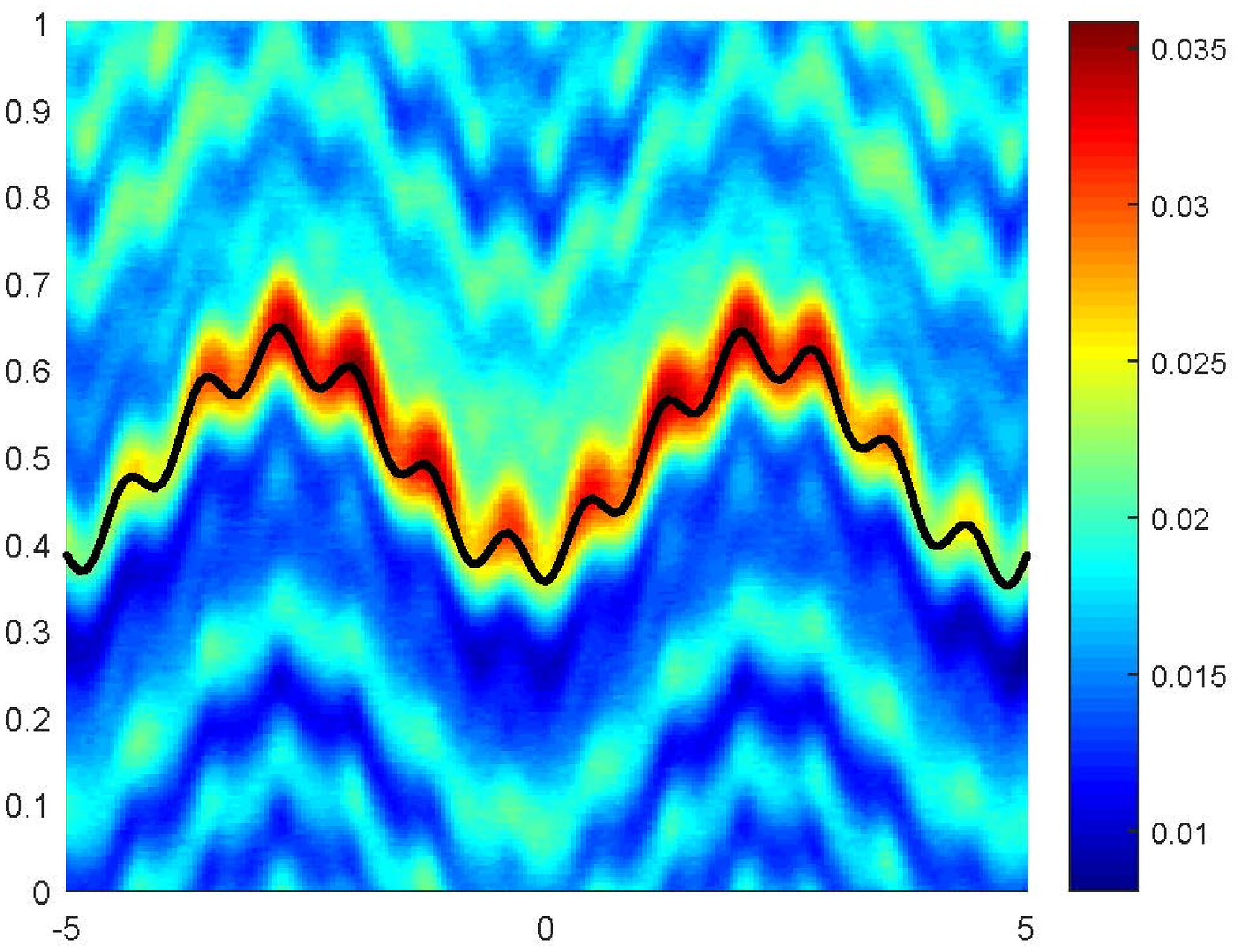}}
\caption{Reconstruction of the rough surface in Example 1 at different wave numbers.
}\label{fig1}
\end{figure}

{\bf Example 2.} This example compares the reconstructed results with different measurement places.
The exact rough surface $S$ is given by
\ben
f(x_1)= 0.5+ 0.1e^{-25(0.15x_1-0.5)^2}+0.2e^{-49(0.15x_1+0.6)^2}-0.25e^{-4x_1^2}.
\enn
The wave numbers are set to be $k_p=8.66,\;k_s=15$, and the noise level of the near-field Cauchy data
is 20\%. Fig. \ref{fig2} shows that the reconstructed result is getting better if both the measurement
line segment $T_{H,A}$ is getting closer to the rough surface and its length $2A$ is getting longer.

\begin{figure}[htbp]
  \centering
  \subfigure[\textbf{$\{(x_1,2)|\;|x_1|\leq 7.5\}$}]{
    \includegraphics[width=1.6in]{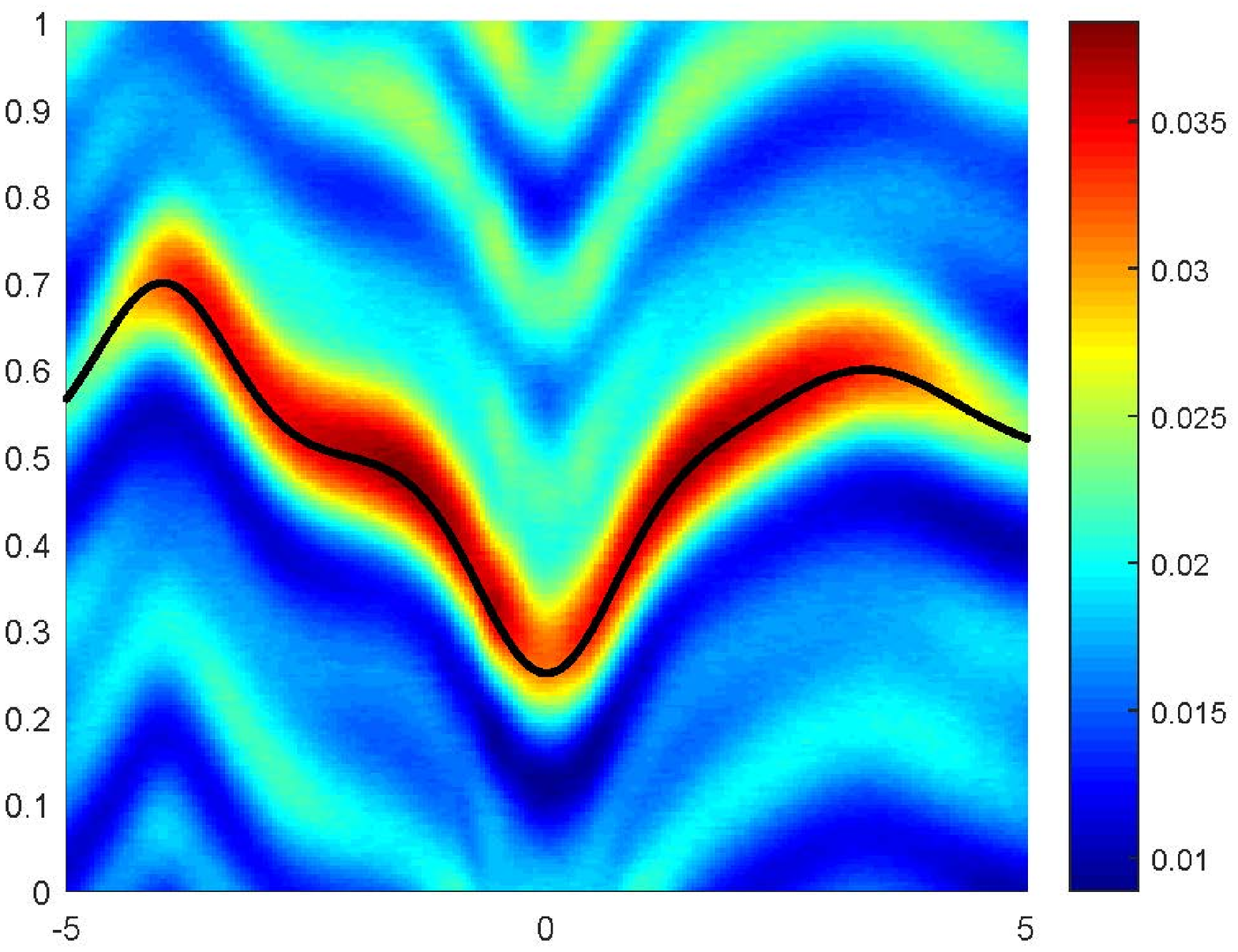}}
  \subfigure[\textbf{$\{(x_1,2)|\;|x_1|\leq 10\}$}]{
    \includegraphics[width=1.6in]{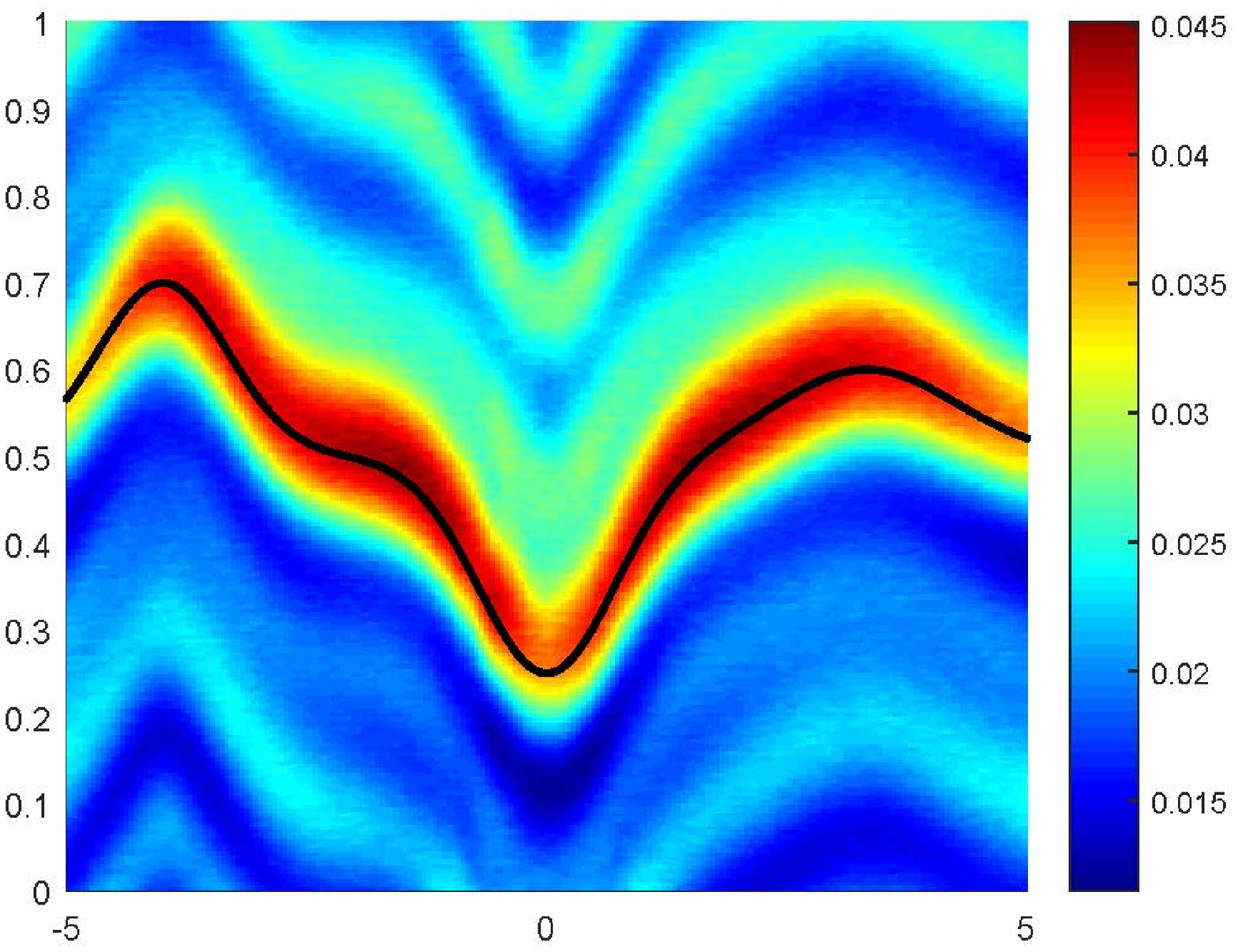}}
  \subfigure[\textbf{$\{(x_1,2)|\;|x_1|\leq 15\}$}]{
    \includegraphics[width=1.6in]{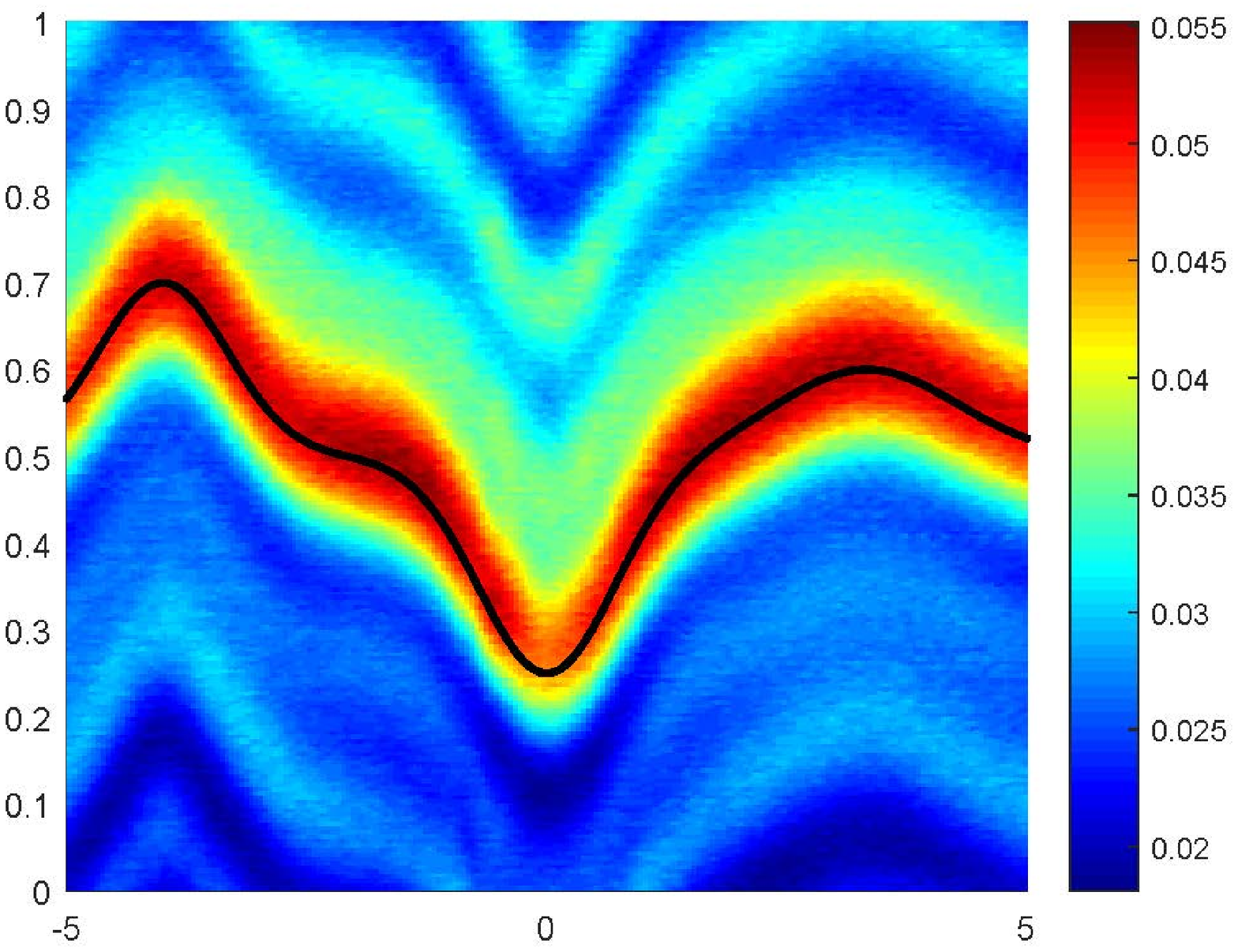}}
    \subfigure[\textbf{$\{(x_1,1.2)|\;|x_1|\leq 10\}$}]{
    \includegraphics[width=1.6in]{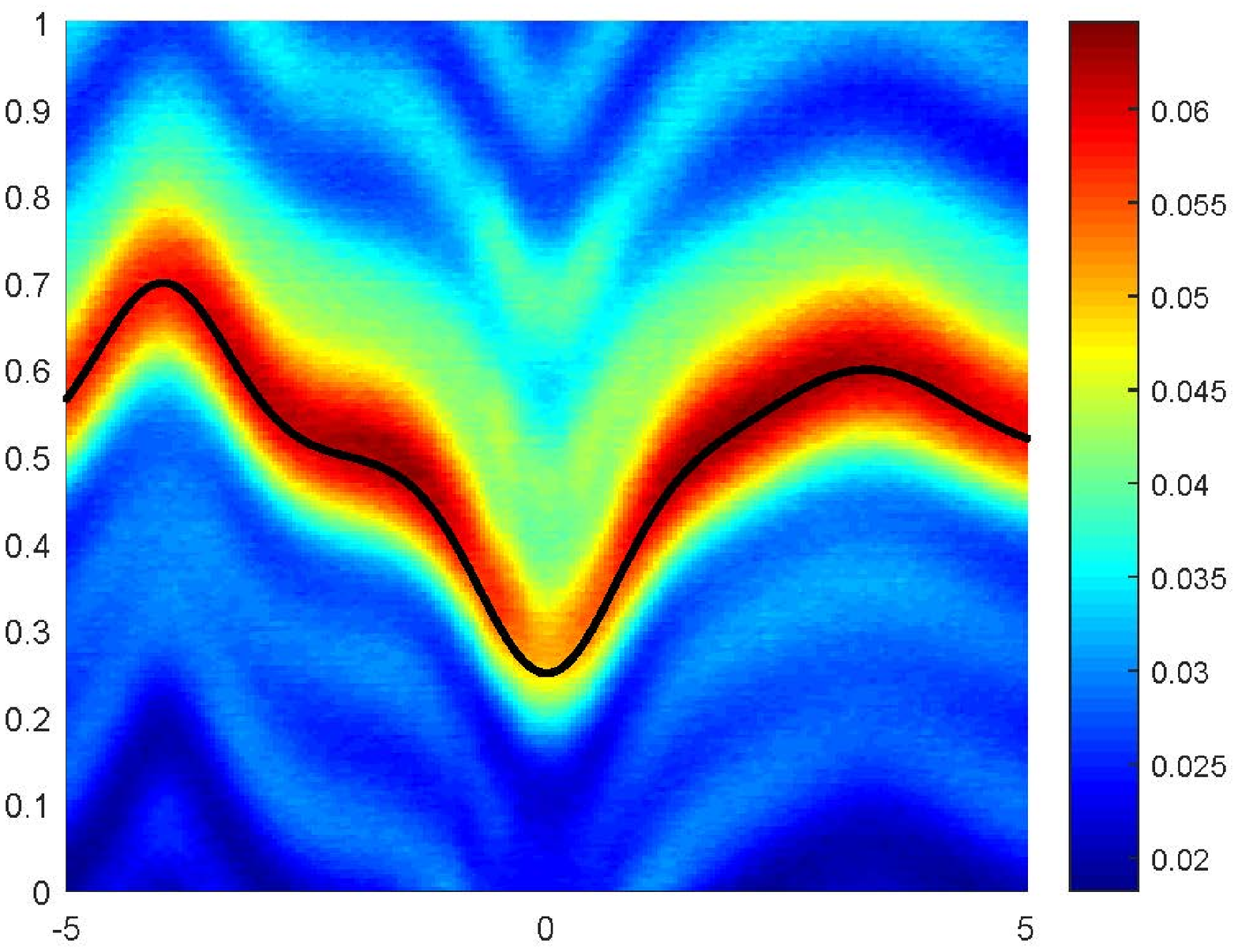}}
  \subfigure[\textbf{$\{(x_1,2)|\;|x_1|\leq 10\}$}]{
    \includegraphics[width=1.6in]{pic/example2-place/ep2-n20/e2_H2_20_n20.eps}}
  \subfigure[\textbf{$\{(x_1,4)|\;|x_1|\leq 10\}$}]{
    \includegraphics[width=1.6in]{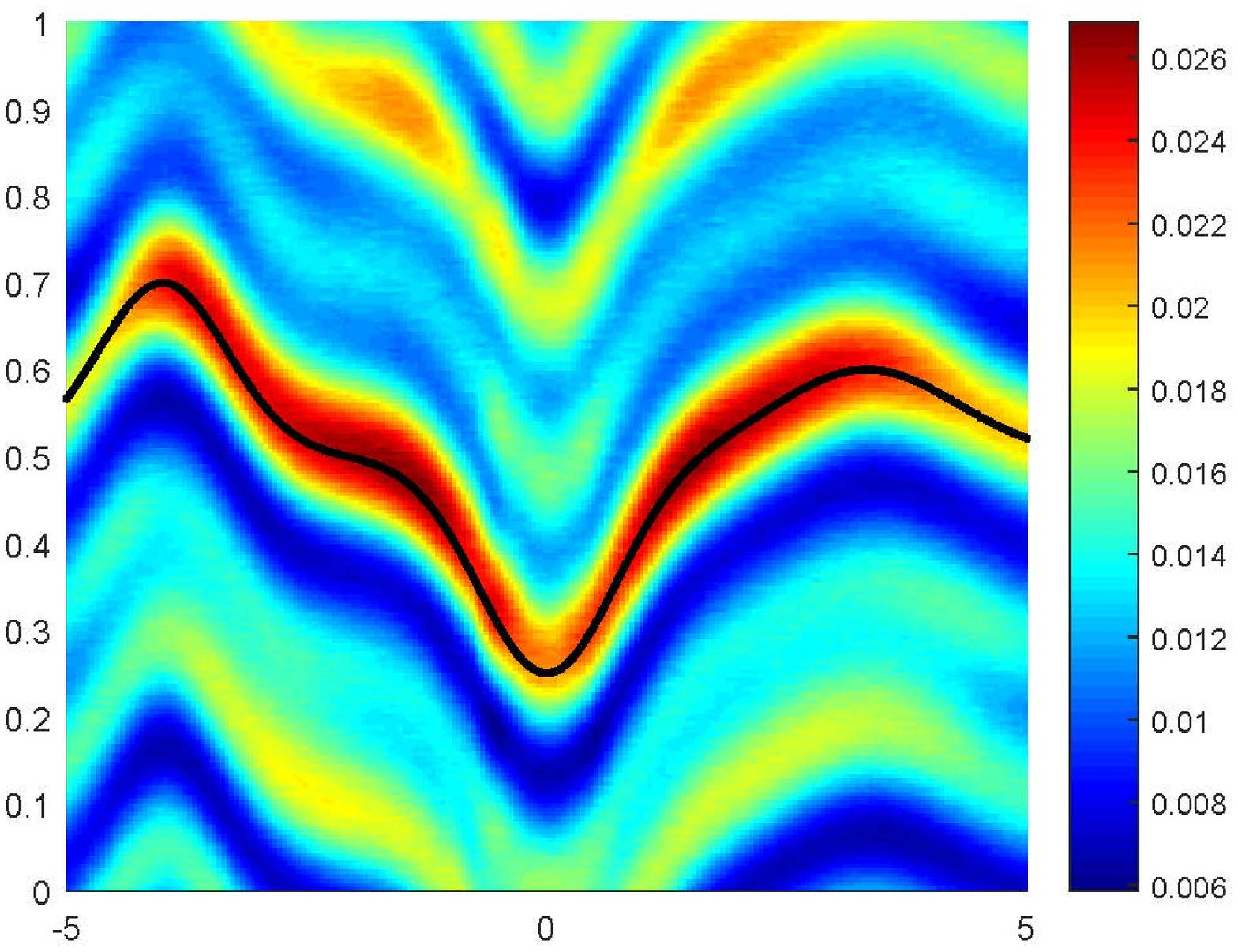}}
\caption{Reconstruction of the rough surface in Example 2 with different measurement places.
Top row (from left to right): the imaging results for the cases when the Cauchy data are
measured on $\{(x_1,2)|\;|x_1|\le7.5\}$, $\{(x_1,2)|\;|x_1|\le10\}$ and $\{(x_1,2)|\;|x_1|\le15\}$,
respectively. Bottom row (from left to right): the imaging results for the cases when the Cauchy data are
measured on $\{(x_1,1.2)|\;|x_1|\le10\}$, $\{(x_1,2)|\;|x_1|\le10\}$ and $\{(x_1,4)|\;|x_1|\leq 10\}$,
respectively.
}\label{fig2}
\end{figure}

{\bf Example 3.} This example presents the reconstructed results with different noise levels.
The exact rough surface $S$ is given by
\ben
f(x_1)= 0.5+0.1\sin(\pi x_1)+0.1\sin(0.5\pi x_1).
\enn
The near-field Cauchy data are measured on $T_{H,A}$ with $H=2,~A=20$, and the wave numbers are set to
be $k_p=8.66,\;k_s=15$. Figure \ref{fig3} presents the reconstructed results from data without noise,
with 20\% noise and with 40\% noise, respectively.
It can be seen that our imaging method is very robust to the noise in the data.

\begin{figure}[htbp]
  \centering
  \subfigure[\textbf{No noise }]{
    \includegraphics[width=1.6in]{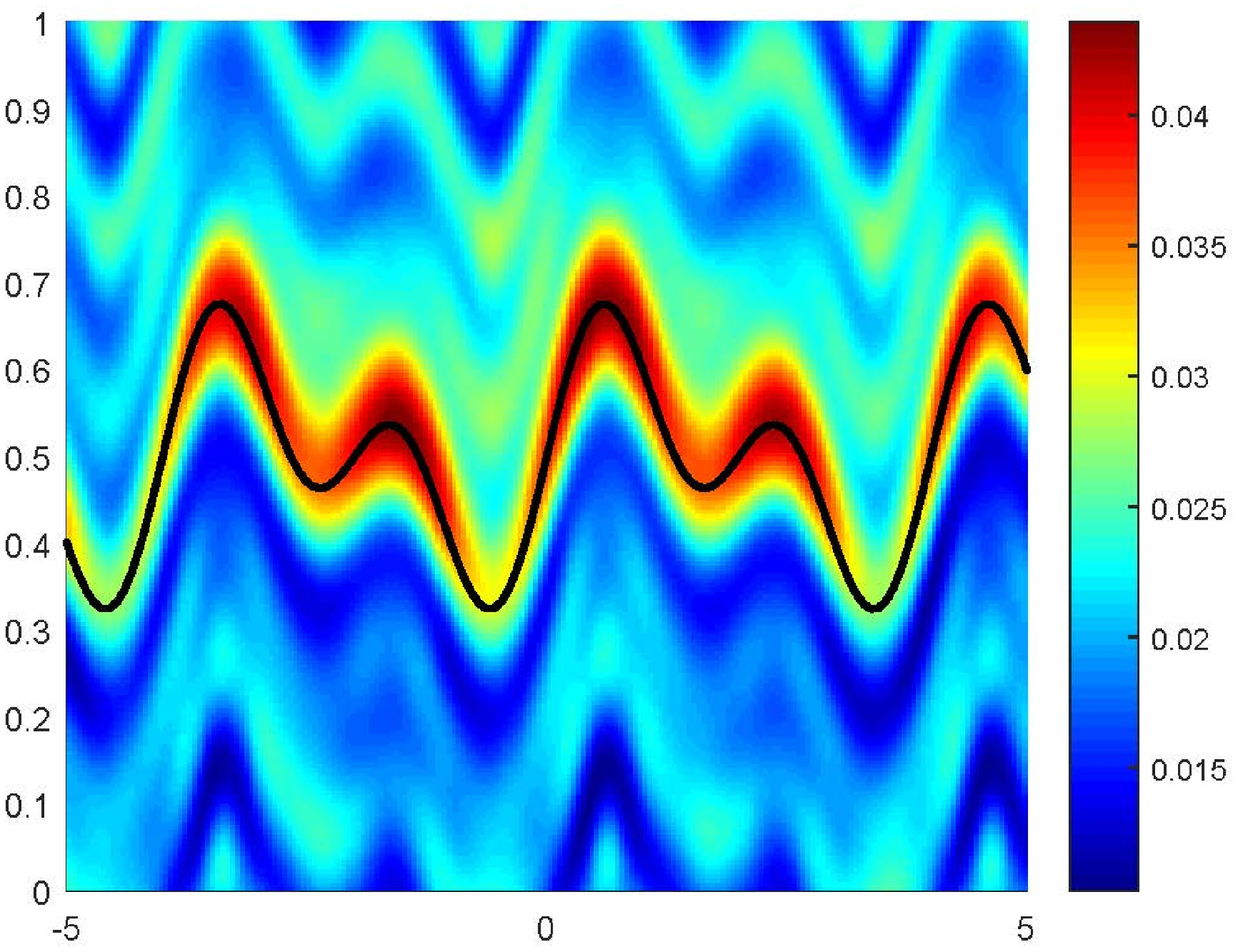}}
  \subfigure[\textbf{20\% noise }]{
    \includegraphics[width=1.6in]{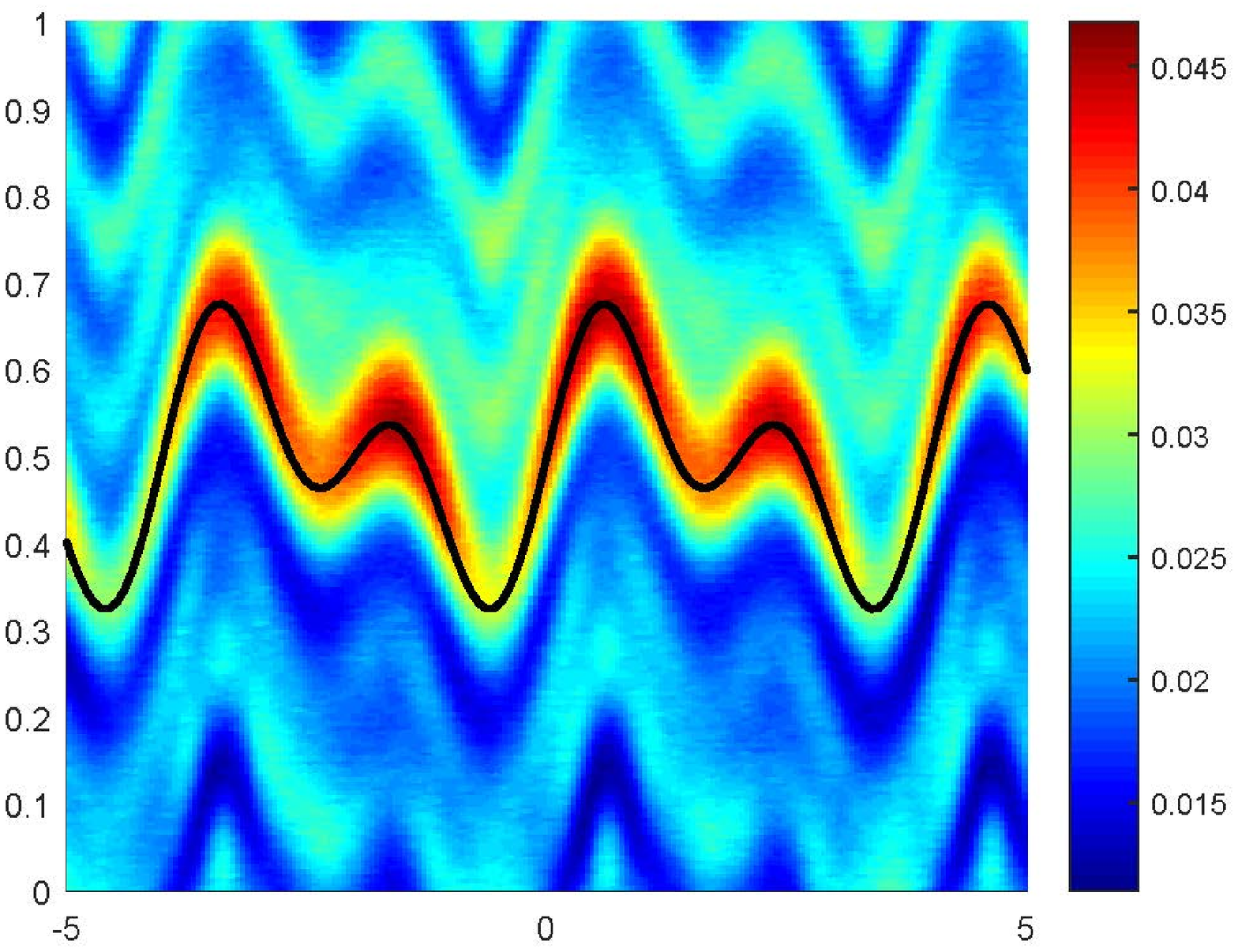}}
  \subfigure[\textbf{40\% noise }]{
    \includegraphics[width=1.6in]{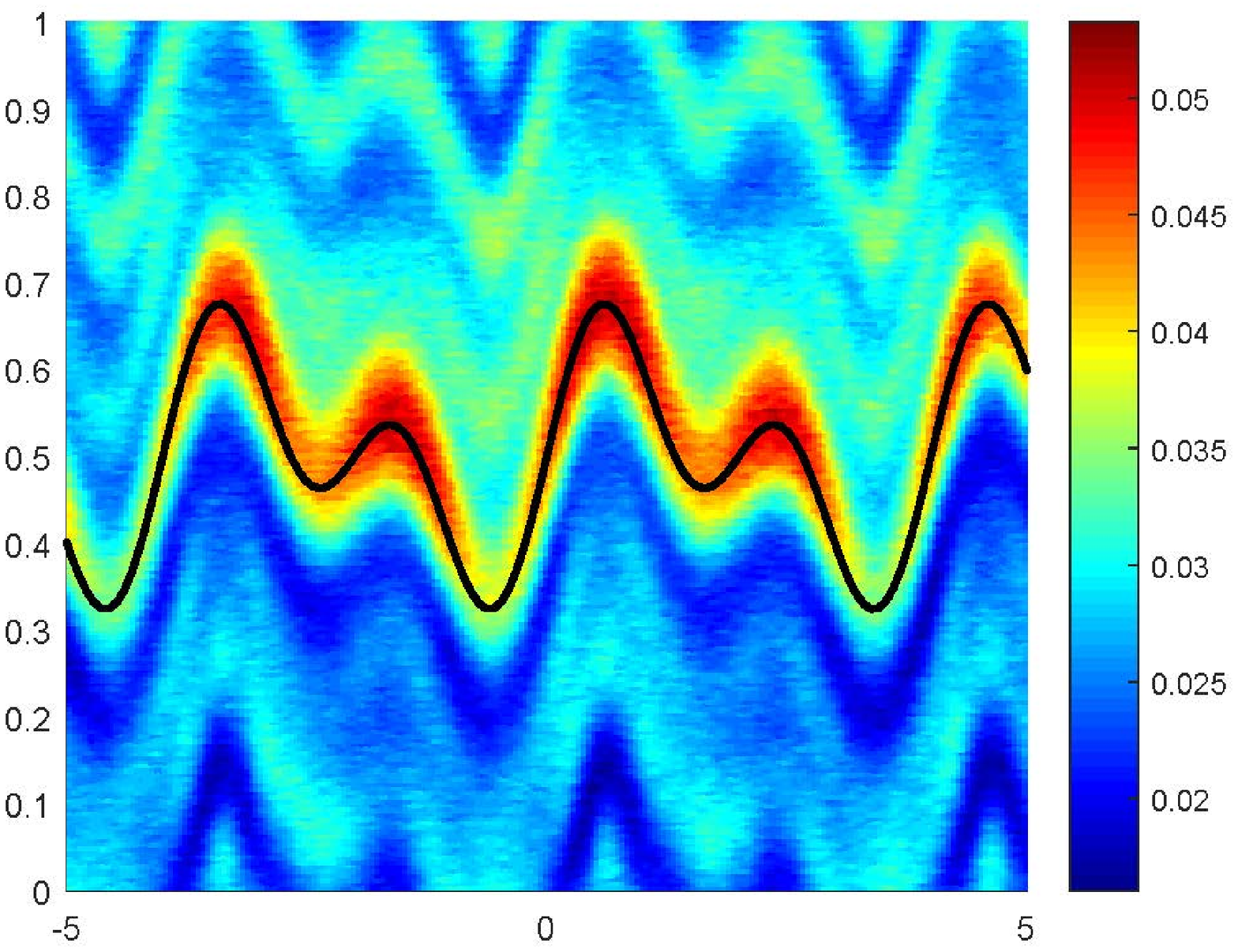}}
\caption{Imaging results of the rough surface from data with different noise levels.
}\label{fig3}
\end{figure}

The above numerical examples illustrate that the direct imaging method gives an accurate and stable
reconstruction of the unbounded rough surface and that the imaging method is robust to noise in
the measurement data.

\section{Conclusion}

This paper proposed a novel direct imaging method for inverse elastic scattering problems by an unbounded
rigid rough surface from the scattered near-field Cauchy data measured on a horizontal straight line segment
at a finite distance above the rough surface. The imaging method can reconstruct the rough surface through
computing the inner products of the measured data and the fundamental solution to the Navier equation and
thus is very fast. Numerical experiments have been carried out to show that
the reconstruction is accurate, stable and robust to noise. The imaging method can
be extended to many other cases such as inverse electromagnetic scattering problems by unbounded
rough surfaces. Progress in this direction will be reported in the future.

\section*{Acknowledgements}

This work is partly supported by the NNSF of China grants 91630309, 11501558 and 11571355.
%and the National Center for Mathematics and Interdisciplinary Sciences, CAS.

\appendix
\renewcommand{\theequation}{\Alph{section}.\arabic{equation}}

\section*{Appendix}
\setcounter{equation}{0}

The following two lemmas are used to prove the reciprocity relation Lemma \ref{RR}.

\begin{lemma}\label{lemmaA}
Let $T_b:=\{x=(x_1,x_2)\;|\;x_1\in\R,x_2=b\}$ be a horizontal line above the rough surface $S$.
Then for the total fields ${\bm u} (z;{ x},\bm p) $ and ${\bm u} (z;{ y},\bm q) $ corresponding
to the incident point sources ${\bm u}^i(z;x,\bm p):= \G(z;x)\bm p$ and
${\bm u}^i(z;y,\bm q):=\G(z;y)\bm q$, $x,y\in\Om$
with ${\bm p}, {\bm q} \in \R^2$, we have
\be\label{eq0332}
\int_{T_b}\left[({\bm P\bm u})(z;x,{\bm p})\cdot{\bm u}(z;y,\bm q)
-({\bm P\bm u})(z;y,\bm q)\cdot {\bm u}(z;x,\bm p)\right]ds(z)=0.
\en
\end{lemma}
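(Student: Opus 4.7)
The plan is to prove (\ref{eq0332}) by applying the third generalised Betti formula to the two total fields on a large upper semi-ball above $T_b$ and then letting its radius tend to infinity. Throughout I assume, as in the invocation of this lemma in the proof of Lemma \ref{RR}, that the source points $x$ and $y$ lie strictly below $T_b$ (that is, $x_2,y_2<b$), so that both $\bm u(\cdot;x,\bm p)$ and $\bm u(\cdot;y,\bm q)$ are $C^2$-smooth solutions of the Navier equation throughout the upper half-space $\{z:z_2>b\}$.

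First, set $B_R^+:=\{z\in\R^2\mid z_2>b,\,|z-(0,b)|<R\}$, with $\partial B_R^+=T_{b,R}\cup\Sigma_R$, where $T_{b,R}$ is the flat bottom and $\Sigma_R$ is the upper semicircle. Since both total fields satisfy the Navier equation in $B_R^+$ with no singularities inside, the third generalised Betti formula (\cite[Lemma 2.4]{A00}) reduces to an identity of the form $I_{T_{b,R}}+J_R=0$, where $I_{T_{b,R}}$ is the target integrand of (\ref{eq0332}) restricted to $T_{b,R}$ and $J_R$ is the analogous bilinear expression integrated over $\Sigma_R$ with outward unit normal. The lemma will follow by sending $R\to\infty$ once I establish that $J_R\to 0$.

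The main task is therefore the uniform asymptotic analysis of $J_R$. I would split $\bm u=\bm u^i+\bm u^s$ and treat the four resulting bilinear products on $\Sigma_R$ separately. For the incident parts (fundamental solutions), the asymptotics (\ref{aaaa})--(\ref{cccc}) give the decompositions $\G=\G_p+\G_s$ and $\Pi^{(1)}=\Pi^{(1)}_p+\Pi^{(1)}_s$ together with the crucial relation $\Pi^{(1)}_\alpha(z;x)\bm q=(i\omega^2/k_\alpha)\G_\alpha(z;x)\bm q+o(|z-x|^{-1})$ for $\alpha\in\{p,s\}$. For the scattered parts, the UPRC representation (\ref{uprc}) and the decay estimates (\ref{asyGammadh1})--(\ref{asyGammadh2}) of $\Pi^{(i)}_{D,h}$ yield analogous radiating far-field asymptotics with separated compressional and shear components. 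Two cancellation mechanisms then apply: (i) \emph{same-type} contributions $(p,p)$ and $(s,s)$ cancel in the antisymmetric combination $\bm P\bm u^x\cdot\bm u^y-\bm P\bm u^y\cdot\bm u^x$ because the scalar multiplier $i\omega^2/k_\alpha$ enters identically in both terms; (ii) \emph{cross-type} contributions $(p,s)$ and $(s,p)$ vanish to leading order because the polarisation projectors $\widehat{x-z}\otimes\widehat{x-z}$ and $\bm I-\widehat{y-z}\otimes\widehat{y-z}$ are mutually orthogonal up to an $O(1/R)$ error, since both $\widehat{x-z}$ and $\widehat{y-z}$ tend to the same radial direction on $\Sigma_R$ as $R\to\infty$.

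Each of $\G$ and $\Pi^{(1)}$ is $O(R^{-1/2})$ on $\Sigma_R$, so after the two cancellations above the integrand is $o(R^{-1})$; since $|\Sigma_R|=O(R)$ this gives $J_R\to 0$. Sending $R\to\infty$ in the Betti identity then yields (\ref{eq0332}). The main technical obstacle is precisely this uniform asymptotic control of the four bilinear pairings on $\Sigma_R$; it parallels the end of the proof of Lemma \ref{HK}, but, in the absence of complex conjugation, the leading contributions cancel completely rather than producing the nonzero Helmholtz--Kirchhoff remainder $2i\,\I_+\G(y,x)$.
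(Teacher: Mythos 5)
Your route (Betti's formula on a large upper half-disc above $T_b$, then $R\to\infty$) is genuinely different from the paper's, but it has a gap at exactly the step you flag as ``the main technical obstacle'': the asymptotic control of the scattered parts on the semicircle $\Sigma_R$. For the incident parts your two cancellation mechanisms are fine, since $\G$ and $\Pi^{(1)}$ do have the radiating asymptotics (\ref{aaaa})--(\ref{cccc}). But the scattered field of an \emph{unbounded} rough surface does not satisfy a Sommerfeld-type radiation condition, and nothing in the paper supplies the $O(R^{-1/2})$ uniform decay, let alone the refined ``outgoing $P$/$S$-separated'' far-field form, that your cancellations require on $\Sigma_R$. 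The UPRC (\ref{uprc}) is an integral representation with an $L^\infty$ density over the infinite line $T_H$, and the estimates (\ref{asyGammadh1})--(\ref{asyGammadh2}) only give $|x_1-y_1|^{-3/2}$ decay in the \emph{horizontal} separation, multiplied by a merely continuous factor $\mathscr{H}(x_2-h,y_2-h)$ with no stated decay in the vertical variables; the only a priori vertical control is the weak growth condition 3) of Problem \ref{pb1}, which even permits growth in $x_2$. Near grazing directions on $\Sigma_R$ the scattered field receives contributions from arbitrarily distant parts of the surface, and this is precisely why the semicircle limit cannot be justified for rough-surface problems — it is the reason the UPRC replaces the Sommerfeld condition in the first place. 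So the claim $J_R\to 0$ is not established, and I do not see how to establish it with the tools available in the paper.

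The paper avoids the semicircle entirely. It writes the total field in the half-plane $z_2>h$ through the Lam\'e potentials $\Psi_p,\Psi_s$ and their angular spectrum (Fourier) representation (\ref{eq331}) on $\Gamma_h$, computes $\mathscr{F}\bm u$ and $\mathscr{F}[\bm P\bm u]$ explicitly in (\ref{fu})--(\ref{fpu}), and converts the integral over the whole line $T_b$ into a frequency-domain integral via Parseval. The resulting integrand splits into eight terms $I_1,\dots,I_8$; four vanish by the parity of the integrand under $\xi\mapsto-\xi$, and the remaining four cancel using $\tilde\mu+\tilde\lambda=\mu+\lambda$ together with $k_s^2=\om^2/\mu$, $k_p^2=\om^2/(2\mu+\la)$. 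If you want to keep a Betti-formula flavour, you would still need to replace your semicircle estimate by an argument of this angular-spectrum type; as written, the proposal does not close.
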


\begin{proof}
Define the Lam\'{e} potentials
\ben
\Psi_p:=-\frac{1}{k_p^2}\text{div}~{\bm u}\quad \text{and}\quad\Psi_s:=-\frac{1}{k_s^2}\text{div}~{\bm u}.
\enn
Then, by  \cite[Lemma 2.2]{A00} and \cite[equation(29)]{C97} we get
\be\no
{\bm u}(z_1,z_2)&=&\frac{i}{2\pi}\int_{-\infty}^{+\infty}\left(
 \begin{matrix}
 \xi\\ \sqrt{k^2_p-\xi^2}
 \end{matrix}\right)
e^{iz_1\xi+i(z_2-h)\sqrt{k_p^2-\xi^2}}\hat\Psi_{p,h}(\xi)d\xi\\ \label{eq331}
&&+\frac{i}{2\pi}\int_{-\infty}^{+\infty}\left(
 \begin{matrix}
 \sqrt{k_s^2-\xi^2}\\ -\xi
 \end{matrix}\right)
e^{iz_1\xi+i(z_2-h)\sqrt{k_s^2-\xi^2}}\hat\Psi_{s,h}(\xi)d\xi,\quad z_2>h,\;
\en
where $\hat\Psi_{p,h}(\xi):=(\mathscr{F}\Psi_{p})(\xi,h)$ and $\hat\Psi_{s,h}(\xi):=(\mathscr{F}\Psi_{s})(\xi,h)$
denote the Fourier transform of $\Psi_p$ and $\Psi_s$ on $\G_h:=\{z=(z_1,z_2)\;|\;z_1\in\R,\,z_2=h\}$
with respect to $z_1$, respectively. It follows from the angular spectrum representation (\ref{eq331}) that
\be\no
(\mathscr{F}{\bm u})(\xi,z_2)&=&i\left(
 \begin{matrix}
 \xi\\ \sqrt{k^2_p-\xi^2}
 \end{matrix}\right)
e^{ i(z_2-h)\sqrt{k_p^2-\xi^2}}\hat\Psi_{p,h}(\xi)\\ \label{fu}
&&+i\left(\begin{matrix}
 \sqrt{k_s^2-\xi^2}\\ -\xi
 \end{matrix}\right)
e^{i(z_2-h)\sqrt{k_s^2-\xi^2}}\hat\Psi_{s,h}(\xi).
\en
Using (\ref{pu}) and a straight calculation gives
\be\no
(\mathscr{F}[{\bm P\bm u}])(\xi,z_2)&=&-(\mu+\tilde{\mu})\sqrt{k_p^2-\xi^2}\left(
 \begin{matrix}
 \xi\\ \sqrt{k^2_p-\xi^2}
 \end{matrix}\right)
e^{ i(z_2-h)\sqrt{k_p^2-\xi^2}} \hat \Psi_{p,h}(\xi)\\ \no
&& -\tilde\lambda k^2_p\left(
 \begin{matrix}
 0\\ 1
 \end{matrix}\right)
e^{i(z_2-h)\sqrt{k_p^2-\xi^2}}\hat\Psi_{p,h}(\xi)\\ \no
&&-(\mu+\tilde{\mu})\sqrt{k_s^2-\xi^2}\left(
 \begin{matrix}
 \sqrt{k_s^2-\xi^2}\\ -\xi
 \end{matrix}\right)
e^{ i(z_2-h)\sqrt{k_s^2-\xi^2}}\hat \Psi_{s,h}(\xi)\\ \label{fpu}
&&+\tilde\mu k^2_s\left(
 \begin{matrix}
 1\\ 0
 \end{matrix}\right)
e^{i(z_2-h)\sqrt{k_s^2-\xi^2}}\hat\Psi_{s,h}(\xi).
\en
By Parseval's formula and the fact that $\ov{(\mathscr{F}\ol{\bm u})(\xi)}
=({\mathscr F}{\bm u})(-\xi)$, we deduce
\be\no
&&\int_{T_b}\left[({\bm P\bm u})({z};{x},\bm p)\cdot{\bm u}({z};{y},\bm q)
-({\bm P\bm u})({z};{y},\bm q)\cdot{\bm  u}({z};{x},\bm p)\right]ds(z)\\ \no
&&\quad=(2\pi)^{-1}\int_{-\infty}^{+\infty}\left[(\mathscr{F}[{\bm P\bm u}])((\xi,b);x,\bm p)
  \ov{(\mathscr{F}\ov{\bm u})}((\xi,b);y,\bm q)\right.\\ \no
&&\qquad\;\;\left.-(\mathscr{F}[{\bm P\bm u}])((\xi,b);{y},\bm q)\ov{(\mathscr{F}
   \ov{\bm u})}((\xi,b);x,\bm p)\right]d\xi\\ \no
&&\quad=(2\pi)^{-1}\int_{-\infty}^{+\infty}\left[(\mathscr{F}[{\bm P\bm u}])((\xi,b);{x},\bm p)
   (\mathscr{F}{\bm u})((-\xi,b);{y},\bm q)\right.\\ \label{314}
&&\qquad\;\;\left.-(\mathscr{F}[{\bm P\bm u}])((\xi,b);{y},\bm q)({\mathscr F}{\bm u})
  ((-\xi,b);{x},\bm p)\right]d\xi,\;\;\; b>h.
\en
Insert (\ref{fu}) and (\ref{fpu}) into (\ref{314}) to get
\ben
\int_{T_b}\left[({\bm P\bm u})({z};{x},\bm p)\cdot{\bm u}({z};{y},\bm q)
-({\bm P\bm u})({z};{y},\bm q)\cdot{\bm u}({z};{x},\bm p)\right]ds(z):=\sum_{j=1}^8 I_j,
\enn
where
\ben
I_1&=&-i(\mu+\tilde{\mu})\int_{-\infty}^{+\infty}\sqrt{k_p^2-\xi^2}(k^2_p-2\xi^2)e^{2i(b-h)\sqrt{k_p^2-\xi^2}}
      \left[\hat\Psi_{p,h,xp}(\xi)\hat\Psi_{p,h,yq}(-\xi)\right.\\
&&\quad\left.-\hat\Psi_{p,h,yq}(\xi)\hat\Psi_{p,h,xp}(-\xi)\right]d\xi,\\
I_2&=&-i(\mu+\tilde{\mu})\int_{-\infty}^{+\infty}\sqrt{k_s^2-\xi^2}(k^2_s-2\xi^2)e^{2i(b-h)\sqrt{k_s^2-\xi^2}}
      \left[\hat\Psi_{s,h,xp}(\xi)\hat\Psi_{s,h,yq}(-\xi)\right.\\
&&\quad\left.-\hat\Psi_{s,h,yq}(\xi)\hat\Psi_{s,h,xp}(-\xi)\right]d\xi,\\
I_3&=&-i(\mu+\tilde{\mu})\int_{-\infty}^{+\infty}\xi\sqrt{k_p^2-\xi^2}(\sqrt{k^2_p-\xi^2}+\sqrt{k^2_s-\xi^2})
      e^{i(b-h)(\sqrt{k^2_p-\xi^2}+\sqrt{k^2_s-\xi^2})}\\
&&\qquad\;\cdot\left[\hat\Psi_{p,h,xp}(\xi)\hat\Psi_{s,h,yq}(-\xi)
        -\hat\Psi_{p,h,yq}(\xi)\hat\Psi_{s,h,xp}(-\xi)\right]d\xi,\\
I_4&=& i(\mu+\tilde{\mu})\int_{-\infty}^{+\infty}\xi\sqrt{k_s^2-\xi^2}(\sqrt{k^2_p-\xi^2}+\sqrt{k^2_s-\xi^2})
      e^{i(b-h)(\sqrt{k^2_p-\xi^2}+\sqrt{k^2_s-\xi^2})}\\
&&\qquad\;\cdot\left[\hat\Psi_{s,h,xp}(\xi)\hat\Psi_{p,h,yq}(-\xi)
       -\hat\Psi_{s,h,yq}(\xi)\hat\Psi_{p,h,xp}(-\xi)\right]d\xi,\\
I_5&=& -i\tilde\lambda k^2_p\int_{-\infty}^{+\infty}\sqrt{k_p^2-\xi^2}e^{2i(b-h)\sqrt{k_p^2-\xi^2}}\\
&&\qquad\;\cdot\left[\hat\Psi_{p,h,xp}(\xi)\hat\Psi_{p,h,yq}(-\xi)
        -\hat\Psi_{p,h,yq}(\xi)\hat\Psi_{p,h,xp}(-\xi)\right]d\xi,\\
I_6&=&i\tilde\mu k^2_s\int_{-\infty}^{+\infty}\sqrt{k_s^2-\xi^2}e^{2i(b-h)\sqrt{k_s^2-\xi^2}}\\
&&\qquad\;\cdot\left[\hat\Psi_{s,h,xp}(\xi)\hat\Psi_{s,h,yq}(-\xi)
        -\hat\Psi_{s,h,yq}(\xi)\hat\Psi_{s,h,xp}(-\xi)\right]d\xi,\\
I_7&=&-i\tilde\lambda k^2_p\int_{-\infty}^{+\infty}\xi e^{i(b-h)
       \left(\sqrt{k^2_p-\xi^2}+\sqrt{k^2_s-\xi^2}\right)}\\
&&\qquad\;\cdot\left[\hat\Psi_{p,h,xp}(\xi)\hat\Psi_{s,h,yq}(-\xi)
       -\hat\Psi_{p,h,yq}(\xi)\hat\Psi_{s,h,xp}(-\xi)\right]d\xi,\\
I_8&=&-i\tilde\mu k^2_s\int_{-\infty}^{+\infty}\xi e^{i(b-h)
       \left(\sqrt{k^2_p-\xi^2}+\sqrt{k^2_s-\xi^2}\right)}\\
&&\qquad\;\cdot\left[\hat\Psi_{s,h,xp}(\xi)\hat\Psi_{p,h,yq}(-\xi)
       -\hat\Psi_{s,h,yq}(\xi)\hat\Psi_{p,h,xp}(-\xi)\right]d\xi.
\enn
From the fact that
\ben
\int_{-\infty}^{+\infty}a(\xi)f(\xi)g(-\xi)d\xi &=&\int_{-\infty}^{+\infty}a(\xi)f(-\xi)g(\xi)d\xi\quad
\mbox{if $a$ is an even function},\\
\int_{-\infty}^{+\infty}b(\xi)f(\xi)g(-\xi)d\xi &=&-\int_{-\infty}^{+\infty}b(\xi)f(-\xi)g(\xi)d\xi\quad
\mbox{if $b$ is an odd function},
\enn
we obtain that $I_1=I_2=I_5=I_6=0$ and
\ben
I_3+I_4+I_7+I_8&=&\int_{-\infty}^{+\infty}i[(\mu+\tilde\mu)(k_s^2-k_p^2)-\tilde\lambda k_p^2-\tilde\mu k^2_s]
\xi e^{i(b-h)(\sqrt{k^2_p-\xi^2}+\sqrt{k^2_s-\xi^2})}\\
&&\qquad\;\cdot\left[\hat\Psi_{s,h,xp}(\xi)\hat\Psi_{p,h,yq}(-\xi)-\hat\Psi_{s,h,yq}(\xi)\hat\Psi_{p,h,xp}(-\xi)\right]d\xi.
\enn
Since $\mu+\la=\tilde\mu+\tilde\la$, $k_s^2=\om^2/\mu$ and $k^2_p=\om^2/(2\mu+\la)$, then $I_3+I_4+I_7+I_8=0$.
The required equation (\ref{eq0332}) thus follows. The lemma is thus proved.
\end{proof}

\begin{lemma}\label{lemmaB}
Let $B_{\vep}({x}):=\{z\in\R^2\;|\;|z-x|<\vep\}$ be a ball centered at $x$ with radius $\vep$.
Then, for any total field $\bm u$ of the scattering problem corresponding to the incident point source
${\bm u}^i(z;x,\bm p)=\G(z,x)\bm p$ with ${\bm p}\in \R^2$ we have
\be\label{BB}
{\bm u}(x)\cdot{\bm p}=\int_{\pa B_{\vep}(x)}\left[({\bm P\bm u})(z)\cdot{\bm u}^i(z;x,\bm p)
-{\bm u}(z)\cdot({\bm P\bm u}^i)(z;x,\bm p)\right]ds(z).
\en
\end{lemma}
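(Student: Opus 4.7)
The plan is to recognize (\ref{BB}) as the Cauchy (third-Green) representation formula for the Navier equation restricted to the ball $B_\vep(x)$, so the argument follows the same pattern as the proof of Lemma \ref{HK}: apply the third generalised Betti formula in a punctured ball and remove the singularity of $\Gamma(\cdot,x)$ by a limit.

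First I would fix $0<\delta<\vep$ and set $A_{\delta,\vep}:=B_\vep(x)\setminus\ov{B_\delta(x)}$. In this annulus $\bm u$ is smooth (its only singularity is at $z=x$, and it is produced precisely by the $\bm u^i$ part of the total field), and so is the test field $\bm u^i(z;x,\bm p)=\Gamma(z,x)\bm p$; both solve the homogeneous Navier equation. Applying the third generalised Betti formula (\cite[Lemma 2.4]{A00}) on $A_{\delta,\vep}$, and using that $(\bm P\bm u^i)\cdot\bm u^i-\bm u^i\cdot(\bm P\bm u^i)$ vanishes pointwise, the annular identity collapses to
\begin{align*}
\int_{\pa B_\vep(x)}\!\!\bigl[(\bm P\bm u)(z)\cdot\bm u^i(z;x,\bm p)-\bm u(z)\cdot(\bm P\bm u^i)(z;x,\bm p)\bigr]\,ds(z)\\
=\int_{\pa B_\delta(x)}\!\!\bigl[(\bm P\bm u)(z)\cdot\bm u^i(z;x,\bm p)-\bm u(z)\cdot(\bm P\bm u^i)(z;x,\bm p)\bigr]\,ds(z),
\end{align*}
with the outward unit normal on each sphere. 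It then suffices to show that the right-hand side converges to $\bm u(x)\cdot\bm p$ as $\delta\to 0$.

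For the limit analysis I would use the decomposition $\Gamma=\Gamma_p+\Gamma_s$ and the near-diagonal expansion $\varPhi_k(z,x)=-\tfrac{1}{2\pi}\log|z-x|+O(1)$ to conclude that $\Gamma(z,x)$ has only a logarithmic singularity, while the generalised stress $\bm P^{(z)}\Gamma(z,x)$ has the $|z-x|^{-1}$ singularity characteristic of the elastic double-layer kernel. Since $\bm P\bm u$ is bounded near $x$, the first piece of the integrand gives $O(\delta\,|\log\delta|)\to 0$. For the remaining piece I would write $\bm u(z)=\bm u(x)+O(|z-x|)$ and insert the explicit leading-order singular tensor of $\bm P^{(z)}\Gamma(z,x)$; the angular integration on $\pa B_\delta(x)$ then collapses to $-\bm u(x)\cdot\bm p$, which is the elastic analogue of the classical jump of the acoustic double-layer potential. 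Combining the two limits yields (\ref{BB}).

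The main obstacle is this last jump-relation computation: one must verify that the angular integral of the leading $|z-x|^{-1}$ singularity of $\bm P^{(z)}\Gamma(z,x)$ over $\pa B_\delta(x)$ produces exactly $-\bm u(x)\cdot\bm p$, with coefficient $1$ and, crucially, independent of the free parameters $\tilde\mu,\tilde\la$ entering (\ref{pu}). In practice, once the near-diagonal expansion of $\Gamma$ (in terms of $\log|z-x|\,\bm I$ and $\widehat{(z-x)}\widehat{(z-x)}^T$ with explicit coefficients in $\mu,\la$) is inserted and the angular integrals over the circle $\pa B_\delta(x)$ are carried out, the $\tilde\mu,\tilde\la$-dependent contributions average out by symmetry, leaving the stated identity. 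This is the standard Somigliana-type representation in two-dimensional elasticity, and I would invoke the potential-theoretic calculation in \cite[Chapter 2]{A00} rather than redo it in full.
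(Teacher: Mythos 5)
Your proposal is correct and follows essentially the same route as the paper: the third generalised Betti formula in a punctured neighbourhood of $x$ combined with the limit analysis of the two singularities, namely that the logarithmic blow-up of $\G(\cdot,x)$ kills the $(\bm P\bm u)\cdot\bm u^i$ term while the $O(|z-x|^{-1})$ part of $\bm P^{(z)}\G(z,x)\bm p$ integrates over the shrinking circle to $\bm p$ times the value $\bm u(x)$. The one thing you defer to \cite[Chapter 2]{A00} --- verifying that the angular integral of $\bm P^{(z)}\G(z,x)\bm p$ has coefficient exactly one and that the $\tilde\mu,\tilde\la$-dependent part cancels (it collapses, using $\tilde\mu+\tilde\la=\mu+\la$, to a multiple of $\hat{R}_\perp\otimes\hat{R}_\perp-\hat{R}\otimes\hat{R}$, which has zero angular mean) --- is precisely the explicit Hankel-function computation that constitutes the body of the paper's proof, so your outline is sound but omits its substantive content.
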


\begin{proof}
By a direct calculation it can be derived that on $\pa B_{\vep}(x)$,
\ben
\Gamma(z,x):=I_1+I_2+I_3+I_4,
\enn
where
\ben
I_1&=&\frac{i}{4\om^2}\left(k_p^2H^{(1)}_0(k_p\vep)-k_s^2H^{(1)}_0(k_s\vep)\right)\hat{R}\otimes\hat{R},\\
I_2&=&-\frac{i}{2\om^2\vep}\left(k_p H^{(1)}_1(k_p\vep)-k_s H^{(1)}_1(k_s\vep)\right)\hat{R}\otimes\hat{R},\\
I_3&=&\frac{i}{4\om^2\vep}\left(k_p H^{(1)}_1(k_p\vep)-k_s H^{(1)}_1(k_s\vep)\right){I},\\
I_4&=&\frac{i}{4\mu}H^{(1)}_0(k_s\vep){I}.
\enn
Here, $R=z-x:=(R_1,R_2),~|z-x|=\vep$ and $\hat{R}=(z-x)/|z-x|$.

By the definition of the generalised stress operator ${\bm P}$ in (\ref{pu}),
and setting $R_{\perp}=(R_2,-R_1)$, we have
\ben
{\bm P}(I_1\bm q)&=&\frac{i}{4\om^2}\left[(\la+2\mu)\left(k_p^3H^{(1)'}_0(k_p\vep)-k_s^3H^{(1)'}_0(k_s\vep)\right)\right.\\
&&\qquad\;\;\left.+\frac{\la}{\vep}\left(k_p^2H^{(1)}_0(k_p\vep)
            -k_s^2 H^{(1)}_0(k_s\vep)\right)\right]\hat{R}\otimes\hat{R}\bm  q\\
&&\qquad\;\;+\frac{i}{4\omega^2}\frac{\mu}{\vep}\left(k_p^2 H^{(1)}_0(k_p\vep)
            -k_s^2 H^{(1)}_0(k_s\vep)\right)\hat{R_{\perp}}\otimes\hat{R_{\perp}}\bm  q ,\\
{\bm P}(I_2\bm q)&=&-\frac{i}{2\omega^2}\left[\frac{\la+2\mu}{\vep}\left(k_p^2H^{(1)'}_1(k_p\vep)
-k_s^2H^{(1)'}_1(k_s\vep)\right)\right.\\
&&\qquad\;\;\left.-\frac{2\mu}{\vep^2}\left(k_p H^{(1)}_1(k_p\vep)
            -k_s H^{(1)}_1(k_s\vep)\right)\right]\hat{R}\otimes\hat{R}\bm  q\\
&&\qquad\;\;-\frac{i}{2\omega^2}\frac{\mu}{\vep^2}\left(k_p H^{(1)}_1(k_p\vep)
            -k_s H^{(1)}_1(k_s\vep)\right)\hat{R_{\perp}}\otimes\hat{R_{\perp}}\bm  q ,\\
{\bm P}(I_3\bm q)&=&\frac{i}{4\omega^2}\left[\left(\frac{k_p^2}{\vep}H^{(1)'}_1(k_p\vep)
         -\frac{k_p}{\vep^2}H^{(1)}_1(k_p\vep)\right)\right.\\
&&\qquad\;\;\;\left.-\left(\frac{k_s^2}{\vep}H^{(1)'}_1(k_s\vep)
  -\frac{k_s}{\vep^2}H^{(1)}_1(k_s\vep)\right)\right]\left[(\la+\mu)\hat{R}\otimes\hat{R}+\mu I\right]\bm q,\\
{\bm P}(I_4\bm q)&=&\frac{i}{4\mu}k_sH^{(1)'}_0(k_s\vep)\left[(\la+\mu)\hat{R}\otimes\hat{R}+\mu I\right]\bm q.
\enn
Thus it follows that
\ben
{\bm P}(\G(z,{x})\bm q)&:=&-\frac{i}{4}k_sH^{(1)}_1(k_s\vep)I
  +\frac{i\mu}{4\om^2\vep}\left[\frac{2}{\vep}\left(k_p H^{(1)}_1(k_p\vep)-k_s H^{(1)}_1(k_s\vep)\right)\right.\\
&&\qquad\;\left.-\left(k_p^2 H^{(1)}_0(k_p\vep)-k_s^2 H^{(1)}_0(k_s\vep)\right)\right]
(\hat{R_{\perp}}\otimes\hat{R_{\perp}}-\hat{R}\otimes\hat{R})\bm q.
\enn
Noting that
\ben
\frac{i}{4}H^{(1)}_0(k_\beta\vep) &\approx& \frac{1}{2\pi}\ln\frac{1}{\vep}+\frac{i}{4}
-\frac{1}{2\pi}\ln\frac{k_\beta}{2}-\frac{C}{2\pi}\quad\mbox{as}\;\vep\rightarrow 0,\\
H^{(1)}_m(k_\beta\vep) &\approx& \frac{(k_\beta\vep)^m}{2^m m!}-i\frac{2^m(m-1)!}{\pi(k_\beta\vep)^m},\quad m=1,2,
\quad\mbox{as}\;\vep\rightarrow 0,
\enn
where $C$ denotes Euler's constant, we deduce that
\ben\no
\lim_{\vep\rightarrow 0}2\pi\vep\G({{z},x})&=&\lim_{\vep\rightarrow 0}\vep
\left[\frac{k_p^2}{\omega^2}\left(\ln\frac{1}{\vep}-\ln\frac{k_p}{2}-C\right)
-\frac{k_s^2}{\omega^2}\left(\ln\frac{1}{\vep}-\ln\frac{k_s}{2}-C\right)\right]\hat{R}\otimes\hat{R}\\ \no
&&+\lim_{\vep\rightarrow 0}\vep\left[\frac{i\pi(k_p^2-k_s^2)}{4\omega^2}+
  \frac{k_s^2}{\omega^2}\left(\ln\frac{1}{\vep}+\frac{i\pi}{2}-\ln\frac{k_s}{2}-C\right)\right]I=0,
\enn
\ben\no
\lim_{\vep\rightarrow 0}2\pi\vep{\bm P}(\G({z},{x})\bm q)&=&\lim_{\vep\rightarrow 0}
\left[\left(-\frac{i\pi k_s^2\vep^2}{4}+1\right)I
+\frac{i\pi\mu\vep^2(k_p^2-k_s^2)}{8\om^2}(\hat{R_{\perp}}\otimes\hat{R_{\perp}}
-\hat{R}\otimes\hat{R})\bm q\right]\\ \label{02}
&=&I.
\enn
The above two equations and the mean value theorem then imply the required equation (\ref{BB}).
\end{proof}


\begin{thebibliography}{99}

\bibitem{AR80} K. Aki and P. Richards,
Quantitative Seismology, Vol. 1, W.H. Freeman \& Co., San Francisco, 1980.

\bibitem{ABG15} H. Ammari, E. Bretin, J. Garnier, H. Kang, H. Lee and A. Wahab,
{Mathematical Methods in Elasticity Imaging}, Princeton University Press, 2015.

\bibitem{A00} T. Arens, {The Scattering of Elastic Waves by Rough Surfaces}, PhD Thesis,
Brunel University, UK, 2000.

\bibitem{A01} T. Arens, Uniqueness for elastic wave scattering by rough surfaces,
{\em SIAM J. Math. Anal. \bf33} (2001), 461-476.

\bibitem{A02} T. Arens, Existence of solution in elastic wave scattering by unbounded rough surfaces,
{\em Math. Methods Appl. Sci. \bf25} (2002), 507-528.

\bibitem{A01a} T. Arens, Linear sampling method for 2D inverse elastic wave scattering,
{\em Inverse Problems \bf17} (2001), 1445-1464.

\bibitem{AK03} T. Arens and A. Kirsch,
The factorization method in inverse scattering from periodic structures,
{\em Inverse Problems \bf19} (2003), 1195-1211.

\bibitem{BGC06} K. Baganas, B.B. Guzina and A. Charalambopoulos, A linear sampling method
for the inverse transmission problem in near-field elastodynamics,
{\em Inverse Problems \bf22} (2006), 1835-1853.

\bibitem{BL13} G. Bao and P. Li, Near-field imaging of infinite rough surfaces,
{\em SIAM J. Appl. Math. \bf73} (2013), 2162-2187.

\bibitem{BL14} G. Bao and P. Li,
Near-field imaging of infinite rough surfaces in dielectric media,
{\em SIAM J. Imaging Sci. \bf7} (2014), 867-899.

\bibitem{BP09} C. Burkard and R. Potthast,
A time-domain probe method for three-dimensional rough surface reconstructions,
{\em Inverse Problems and Imaging \bf3} (2009), 259-274.

\bibitem{C97} S.N. Chandler-Wilde,
{The impedance boundary value problem for the Helmholtz equation in a half-plane},
{\em  Math. Meth. Appl. Sci. \bf20} (1997), 813-840.

\bibitem{CCH13} J. Chen, Z. Chen and G. Huang,
Reverse time migration for extended obstacles: acoustic waves,
{\em Inverse Problems \bf29} (2013) 085005.

\bibitem{CH14} Z. Chen and G. Huang,
Reverse time migration for extended obstacles: elastic waves (in Chinese),
{\em Science in China A: Math. \bf45} (2015), 1103-1114.

\bibitem{CK} D. Colton and R. Kress,
Inverse Acoustic and Electromagnetic Scattering Theory (3rd Edition), Springer, New York, 2013.

\bibitem{CKAGK07} A. Charalambopoulos, A. Kirsch, K. A. Anagnostopoulos, D. Gintides and K. Kiriaki,
The factorization method in inverse elastic scattering from penetrable bodies,
{\em Inverse Problems \bf23} (2007), 27-51.

\bibitem{CL05} S.N. Chandler-Widle and C. Lines,
A time domain point source method for inverse scattering by rough surfaces,
{\em Computing \bf75} (2005), 157-180.

\bibitem{EH12} J. Elschner and G. Hu, Elastic scattering by unbounded rough surfaces,
{\em SIAM J. Math. Anal. \bf44} (2012), 4101-4127.

\bibitem{EH15} J. Elschner and G. Hu, Elastic scattering by unbounded rough surfaces: solvability in
weighted Sobolev spaces, {\em Appl. Anal. \bf94} (2015), 251-278.

\bibitem{EH12a} J. Elschner and G. Hu, An optimization method in inverse elastic scattering for
one-dimensional grating profiles, {\em Commun. Comput. Phys. \bf12} (2012), 1434-1460.

\bibitem{HLZ13} G. Hu, Y. Lu and B. Zhang, The factorization method for inverse elastic scattering
from periodic structures, {\em Inverse Problems \bf29} (2013) 115005 (25pp).

\bibitem{HKY16} G. Hu, A. Kirsch and T. Yin,
Factorization method in inverse interaction problems with bi-periodic interfaces between acoustic and
elastic waves, {\em Inverse Probl. Imaging \bf10} (2016), 103-129.

\bibitem{IJZ12} K. Ito, B. Jin and J. Zou,
A direct sampling method to an inverse medium scattering problem,
{\em Inverse Problems \bf28} (2012) 025003.

\bibitem{LLSS13}
J. Li, H. Liu, Z. Shang and H. Sun, Two single-shot methods for locating multiple electromagnetic scatterers, {\em SIAM J. Appl. Math.} {\bf 73} (2013), 1721-1746.

\bibitem{LLZ14}
J. Li, H. Liu and J. Zou, Locating multiple multiscale acoustic scatterers, {\em Multiscale Model. Simul.} {\bf 12} (2014), 927-952.

\bibitem{LS14} J. Li and G. Sun,
A nonlinear integral equation method for the inverse scattering problem by sound-soft rough surfaces,
{\em Inverse Probl. Sci. Enging. \bf23} (2015), 557-577.

\bibitem{LSZ17} J. Li, G. Sun and B. Zhang,
The Kirsch-Kress method for inverse scattering by infinite locally rough interfaces,
{\em Appl. Anal. \bf96} (2017), 85-107.

\bibitem{LW15} P. Li and Y. Wang, Near-field imaging of small perturbed obstacles for elastic waves,
{\em Inverse Problems \bf31} (2015) 085010

\bibitem{LWZ15} P. Li, Y. Wang and Y. Zhao, Inverse elastic surface scattering with near-field data,
{\em Inverse Problems \bf31} (2015) 035009.

\bibitem{LWWZ16} P. Li, Y. Wang, Z. Wang and Y. Zhao, Inverse obstacle scattering for elastic waves,
{\em Inverse Problems \bf32} (2016) 115018.

\bibitem{LWZ16} P. Li, Y. Wang and Y. Zhao, Near-field imaging of biperiodic surfaces for
elastic waves, {\em J. Comput. Phy. \bf324} (2016), 1-23.

\bibitem{LSZZ18} X. Liu, J. Li, B. Zhang and H. Zhang,
A Nystr\"{o}m method for elastic wave scattering by an unbounded rough surface, in preparation, 2018.

\bibitem{LZZ18} X. Liu, B. Zhang and H. Zhang,
A direct imaging method for inverse scattering by an unbounded rough surface,
accepted by {\em SIAM J. Imaging Sci.}, arXiv:1801.06280v1, 2018.

\bibitem{L17} X. Liu, A novel sampling method for multiple multiscale targets from scattering amplitudes
at a fixed frequency, {\em Inverse Problems \bf33} (2017) 085011

\bibitem{NG04} F.S. Nintcheu and B.B. Guzina, A linear sampling method for near-field inverse problems
in elastodynamics, {\em Inverse Problems \bf20} (2004), 713-736.

\bibitem{P10} R. Potthast, A study on orthogonality sampling,
{\em Inverse Problems \bf26} (2010) 074015.

%\bibitem{S05} V. Sevroglou,
%The far-field operator for penetrable and absorbing obstacles in 2D inverse elastic scattering,
%{\em Inverse Problems \bf21} (2005), 717-738.

\bibitem{YZZ12} J. Yang, B. Zhang and R. Zhang,
A sampling method for the inverse transmission problem for periodic media.
{\em Inverse Problems \bf28} (2012) 035004.

\bibitem{YZZ13} J. Yang, B. Zhang and R. Zhang, Reconstruction of penetrable grating profiles,
{\em Inverse Probl. Imaging \bf7} (2013), 1393-1407.

\bibitem{YZZ16} J. Yang, B. Zhang and R. Zhang, Near-field imaging of periodic interfaces in
multilayered media, {\em Inverse Problems \bf32} (2016) 035010.

\bibitem{YHXZ16} T. Yin, G. Hu, L. Xu and B. Zhang, Near-field imaging of obstacles with the
factorization method: Fluid-solid interaction, {\em Inverse Problems \bf32} (2016) 015003.

\bibitem{ZZ13} H. Zhang and B. Zhang,
A novel integral equation for scattering by locally rough surfaces and application to the inverse problem,
{\em SIAM J. Appl. Math. \bf73} (2013), 1811-1829.

\end{thebibliography}
\end{document}